\setlist{leftmargin=*, topsep=0.5em, parsep=0pt, itemsep=1em, labelindent=0pt, align=left}
\theoremstyle{definition}
\newtheorem{theorem}{Theorem}[section]
\newtheorem{definition}{Definition}[section]
\newtheorem{remark}{Remark}[section]
\newtheorem{assumption}{Assumption}
\crefname{assumption}{assumption}{assumptions}
\newtheorem*{assumption*}{Assumption}
\crefname{assumption*}{assumption}{assumptions}
\Crefname{assumption*}{Assumption}{Assumptions}
\numberwithin{equation}{section}
\newtheoremstyle{blue}%
{6.5pt}
{5pt}
{}
{}
{\bfseries\color{blue}}
{.}
{.4em}
{}
\theoremstyle{blue}
\declaretheoremstyle[
spaceabove=6pt, spacebelow=6pt,
headfont=\normalfont\bfseries,
headpunct=,%
notefont=\normalfont\bfseries, notebraces={\hspace*{-4pt}}{},
bodyfont=\normalfont,
postheadspace=\newline, 
numbered=no,
]{mystyle}
\declaretheorem[style=mystyle,name={}]{myenv}
\newcommand*{\fullref}[1]{\Cref{#1} (\emph{\nameref*{#1}})}
\newcommand*{\fullrefs}[2]{Assumptions \labelcref{#1} (\emph{\nameref*{#1}}) and \labelcref{#2} (\emph{\nameref*{#2}})}
\newcommand*{\assref}[1]{\hyperref[{#1}]{(\emph{\nameref*{#1}})}}
\newcommand*{\partref}[1]{\labelcref{#1} (\emph{\nameref*{#1}})}
\newcommand{\littleo}[1]{o(#1)\xspace}
\newcommand{\Pro}{\mathbb{P}}
\newcommand{\Ex}{\mathbb{E}}
\newcommand{\EE}{\mathcal{E}}
\newcommand{\FF}{\mathcal{F}}
\newcommand{\HH}{\mathcal{H}}
\newcommand{\LL}{\mathcal{L}}
\newcommand{\PP}{\mathcal{P}}
\newcommand{\AAbb}{\mathbb{A}}
\newcommand{\ubar}{\underline}
\newcommand{\dd}{\mathrm{d}}
\newcommand{\alphabf}{\boldsymbol{\alpha}}
\newcommand{\bs}{\boldsymbol}
\newcommand{\reals}{\mathbb{R}}
\newcommand{\vep}{\varepsilon}
\newcommand{\tinT}{t\in[0,T]}
\newcommand{\rind}[1]{\textbf{1}_{#1}}
\numberwithin{table}{section}
\definecolor{red}{HTML}{D62728}
\definecolor{blue}{RGB}{ 0, 109, 219}
\definecolor{dgreen}{rgb}{0,.8,0}
    \let\Cref\crtCref
    \let\cref\crtcref
\crefname{enumi}{}{}
\Crefname{enumi}{}{}
\begin{document}
\title[]{Switching to a Green and sustainable finance setting: a mean field game approach}
\author{Anna Aksamit$^{\S}$}
\author{Kaustav Das$^{\dagger \ddagger}$}
\author{Ivan Guo$^{\dagger \ddagger}$}
\author{Kihun Nam$^{\dagger \ddagger}$}
\author{Zhou Zhou$^{\S}$}
\address{$^\dagger$School of Mathematics, Monash University, Victoria, 3800 Australia.}
\address{$^\ddagger$Centre for Quantitative Finance and Investment Strategies, Monash University, Victoria, 3800 Australia.}
\address{$^\S$School of Mathematics and Statistics, University of Sydney, New South Wales, 2006 Australia.}
\email{anna.aksamit@sydney.edu.au}
\email{kaustav.das@monash.edu}
\email{ivan.guo@monash.edu}
\email{kihun.nam@monash.edu}
\email{zhou.zhou@sydney.edu.au}

\date{}

\maketitle

\begin{abstract}

We consider a continuum of carbon-emitting firms who seek to maximise their stock price, and a regulator (e.g., Government) who wishes for the economy to flourish, whilst simultaneously punishing firms who behave non-green. Interpreting the regulator as a major player and the firms as the minor players, we model this setting through a mean field game with major and minor players. We extend the stochastic maximum principle derived by Carmona \& Zhu [A probabilistic approach to mean field games with major and minor players. Annals of Applied Probability, 2016, 94, 745--788] by relaxing the assumptions on the forms of the minimisers for the Hamiltonians, allowing them to depend on more arguments. This allows the major and representative minor player to interact in a more natural fashion, thereby permitting us to consider more realistic models for our green and sustainable finance problem. Through our stochastic maximum principle, we derive explicit Nash equilibria for a number of examples.

\vspace{0.5cm}

\hspace{-.42cm}Keywords: Mean Field Games; Game theory; FBSDE of McKean-Vlasov type, Major and Minor players; Green and sustainable finance

\end{abstract}

\section{Introduction}
\label{sec:introduction}
\noindent
In this article, we consider a mean field description of a Green and sustainable finance problem, where we have a continuum of carbon-emitting firms who seek to maximise their stock price, and a regulator (e.g., Government) who wishes for the economy to thrive, whilst simultaneously punishing firms who behave non-green. We model this setting through a type of mean field game with a major and minor player, where the regulator is designated as the major player, and the firms as the minor players. To elaborate, such a game consists of a population of infinitely many indistinguishable and statistically identical players (the minor players), whose individual contributions to the system are negligible, and another separate player whose impact on the whole system is ever present and felt at all times (the major player). Both the major player and representative minor player's objective is to minimise their total expected utility, in the sense of a sort of Nash equilibrium, over the set of their admissible strategies. \citep{carmona2016probabilistic} examined this type of game in great detail, and under suitable assumptions proved both necessary and sufficient stochastic maximum principles for this setting, and moreover, proved convergence of approximate Nash equilibria in the finite player game to Nash equilibria in the mean field description. Despite this, a critical assumption they make is on the form of minimisers of the major and representative minor player's minimisers, namely, that they do not depend on certain arguments. We found however, that the class of models that obeys this assumption is rather small, and thus prevents one from considering many realistic models in applications. Therefore, our main focus in this article is to relax these assumptions considerably, and then in this new setting, derive similar results to theirs. This enables us to study the aforementioned regulator/firm Major/minor player game with non-trivial models.

The theory of mean field games was first initiated by \citep{lions20061, lions20062, lasry2007mean} and independently by \citep{huang2003individual, huang2006large}. The main purpose of the theory is to study large systems of competitive individuals at a macroscopic level rather than the microscopic by enforcing the mean field ideology, namely that the number of players is large, they are statistically identical, their individual contributions are negligible, and each individual can only interact with the population through the overall presence of the population - the so-called mean field. The microscopic approach requires one to investigate a finite player stochastic differential game, a feat which is nigh impossible when the number of players is assumed to be large. Therefore, the mean field approach is highly enticing as it allows one to recast the system through a mean field description, which is typically easier to solve than the corresponding stochastic differential game formulation of the problem. By solving the mean field game, it is then possible to construct approximate Nash equilibria for the finite player stochastic differential game. This in turn justifies the theory of mean field games as a tractable way to tackle such systems and most importantly, form meaningful solutions to the finite player setting. Mathematically, there have been two primary directions for tackling mean field games, the original analytic approach developed by Lasry and Lions, which is reminiscent of the Dynamic Programming Principle and resulting HJB equation from control theory. The other approach which came later is called the probabilistic approach, spearheaded by Carmona and Delarue, see \citep{carmona2013probabilistic} and which relies on adaptations of the Pontryagin maximum principles, again from control theory. It would be remiss to not mention the books \citep{carmona2018mfgI, carmona2018mfgII}, which are a compendium on the subject of mean field games via both the analytic and probabilistic approach, however the focus is mainly on the probabilistic approach.

There are classes of large systems consisting of competitive players where the mean field ideology is violated in at least one way. Our focus in this article is on the situation where we have a player whose individual contribution to the system is ever present, and does not become negligible no matter how many players are included in the system, and is also not statistically identical to the other players. It is appropriate to call this particular player the major player, and dub the others as minor players. This setting, dubbed mean field games with major and minor players, was first introduced in the linear-quadratic setting \citep{huang2006large} and have been extensively studied since, see for example \citep{huang2010large, nguyen2012linear, nguyen2012mean, nourian2013, carmona2017alternative}. However, there have been disagreements within the community as to what truly constitutes a major player in the mean field setting, or in other words, what is the right mean field strategy that describes a mean field game with major and minor players. The Major/minor player strategy detailed in \citep{carmona2016probabilistic} was proved to be an accurate description of this setting, as they demonstrated that the approximate Nash equilibria they construct for the finite player setting indeed converges to the Nash equilibria in the mean field setting. This is the interpretation that we adopt in this article. It is also worth noting that there have been efforts in describing a mean field setting for a Stackelberg game, for example \citep{bensoussan2015mean, bensoussan2016mean, elie2019tale, dayanikli2024machine}.

Mean field game theory has proved as an attractive method to describe settings regarding Environmental, Social, and Governance concerns, in particular scenarios pertaining to Green and sustainable finance. For example, \citep{lavigne2023decarbonization} consider a setting where there is a continuum of carbon-emitting firms, and two groups of investors, the green-minded investor and neutral investor. They establish existence of Nash equilibria for the mean field game amongst the firms and obtain results which have meaningful interpretations for the game they consider. In \citep{dayanikli2024multi}, they consider a setting with multiple populations of carbon-emitting electricity producers, where each population is at the behest of a taxation regulator (who acts as a principal as they are assumed to have no state process). Adapting the theory developed in \citep{carmona2016probabilistic}, they model this setting as a so-called multipopulation mean field game with major players and minor players. 

Our main contribution in this article is an extension of the necessary and sufficient stochastic maximum principles from \citep{carmona2016probabilistic}, these are \Cref{thm:necessarySMPMm} and \Cref{thm:sufficientSMPMm} respectively. The primary attribute of the extension is that we permit the minimisers of the players' Hamiltonians to depend on more arguments than previously allowed in the literature, which we believe is crucial for the purposes of applications. This is evident in \Cref{sec:ESGapplications}, where we tackle a number of examples modelling the regulator/firm Major/minor player game previously described, which we believe can not be addressed with previous results in the literature. We hope that this extension will motivate and excite both practitioners and academics alike to develop more sophisticated models to address problems related to Green and sustainable finance.  

The sections are organised as follows: 
\begin{itemize}
    \item In \Cref{sec:preliminaries} we introduce the mean field game framework we will be working with in this article. We describe the Major/minor mean field game strategy that will be a central point in this article.
    \item In \Cref{sec:smp} we state and prove both a necessary and sufficient stochastic maximum principle for our Major/minor mean field game strategy.
    \item In \Cref{sec:enlargingstatespace} we adjust our model framework by enlarging the state process to include a component corresponding to the representative minor player's integrated strategy. This technique allows us to achieve results similar to that of extended mean field games.
    \item In \Cref{sec:ESGapplications} we consider examples of the regulator/firm Major/minor player game previously described, and derive explicit Nash equilibria.
\end{itemize}
\Cref{appen:Lderivative} contains some useful information regarding the differential calculus over spaces of probability measures that we choose to adopt in this article, the so-called L differential calculus, and is mainly a summary of ideas found in \citep[][Chapter 5]{carmona2018mfgI}. \Cref{appen:necessarySMPproof} contains the proof of \Cref{thm:necessarySMPMm}.

\section{Prelimininaries}
\label{sec:preliminaries}
\noindent
All vectors in this article will be understood as column vectors. For a function $f: \reals^n \to \reals^m$, $\partial_x f$ will denote its Jacobian, which is an element of $\reals^{n \times m}$ for fixed $x \in \reals^n$. When $m = 1$, this will coincide with the gradient of $f$. For $a, b \in \reals^n$, we write $a \cdot b := \sum_{i = 1}^n a_i b_i$ to denote the usual Euclidean inner product on $\reals^n$. For matrices $A, B \in \reals^{n \times m}$, we write $A \cdot B = \text{tr}(A B^\top)$ to denote the Frobenius inner product. Sometimes when an argument in a function is redundant, we will put a $\circ$ in place of it.

Let $(E, \EE)$ be a measurable space where $(E, \| \cdot \|_E)$ is a normed space and $\EE$ the Borel $\sigma$-field generated by $\| \cdot \|_E$. In the following we will denote by $\PP_p(E)$, $p \geq 0$, the space of $p$-integrable probability measures, that is, probability measures on $\mu$ on $(E, \EE)$ that satisfy
\begin{align*}
	\int_E \|x\|_E^p \mu (\dd x) < \infty.
\end{align*}
The space $\PP_p(E)$ is endowed with the following $p$-Wasserstein metric: 
\begin{align*}
	W_p(\mu, \mu') := \left (\inf_{\pi \in \Pi(\mu, \mu')} \int_{E \times E} \|x - y\|_E^p \pi(\dd x \times \dd y) \right )^{1/p}
\end{align*}
where $\Pi(\mu, \mu')$ is the subset of $\PP_p(E \times E)$ with marginals $\mu$ and $\mu'$ in the first and second components respectively. The $p$-th moment of a measure $\mu \in \PP_p(E)$ is defined as 
\begin{align*}
	M_p(\mu) := \int_E \| x \|_E^p \mu(\dd x).
\end{align*}
Moreover, the mean of a probability measure $\mu$ is denoted by $\bar \mu := \int_E x \mu (\dd x)$, the integral being interpreted as a Bochner integral. In general, the mean of a probability measure does not coincide with its first moment.

For an $N$-tuple $x = (x^1, \dots, x^N) \in E^N$ we will write $x^{-i} = (x^1, \dots, x^{i-1}, x^{i+1}, \dots, x^N)$ to denote the tuple with the $i$-th element removed, and $(y, x^{-i}) = (x^1, \dots, x^{i-1}, y, x^{i+1}, \dots, x^N)$ so that $x = (x^i, x^{-i})$. Moreover, the empirical distributions of $x$ and $x^{-i}$ will be written as 
\begin{align*}
	\bar \mu_x &:= \frac{1}{N} \sum_{j =1}^N \delta_{x^j},
	&\bar \mu_{x^{-i}} &:= \frac{1}{N-1} \sum_{ 1 \leq j \neq i \leq N} \delta_{x^j},
\end{align*}
respectively, where $\delta_y$ denotes the Dirac measure on $(E, \EE)$.

\begin{remark}[Matrix notation]
\label{remark:matrixnotation}
In the rest of this article, we will utilise the following notation regarding matrices:
The zero and identity matrices of dimension $p \times q$ will be denoted by $\bs{0}_{p \times q}$ and $I_{p \times q}$ respectively. For matrices $A \in \reals^{p \times c}, B \in \reals^{q \times c}$ we denote by $(A; B)$ the stacking of $A$ on top of $B$, and which takes values in $\reals^{p + q \times c}$. This only makes sense when the $A$ and $B$ have the same number of columns. Precisely, $(A; B)_{i, j} = A_{i,j}\rind{\{1 \leq i \leq p\}} + B_{i, j} \rind{\{p + 1 \leq i \leq p + q\}}$. 
For matrices $C \in \reals^{r \times p}, D \in \reals^{r \times q}$, we denote by $(C, D)$ the placing of $D$ onto the right of $C$, which takes values in $\reals^{r \times p + q}$. This only makes sense when the matrices have the same number of rows. Precisely, $(C, D)_{i,j} = C_{i,j} \rind{\{1 \leq j \leq p\}} + D_{i,j} \rind{\{p + 1 \leq j \leq p + q\}}$. It can seen that $(A; B) = (A^\top, B^\top)^\top$ and similarly, $(C, D) = (C^\top; D^\top)^\top$.

\end{remark}
\subsection{Model framework}
\label{sec:modelframework}
Consider a finite time horizon $[0, T]$. Let $B = (B_t)_{\tinT}$ be a $m$-dimensional standard Brownian motion with augmented natural filtration $(\FF_t)_{\tinT}$. In the rest of this article, a subscript $0$ will refer to the major player, whereas the absence of a subscript will refer to the representative minor player. Let $A_0 \subseteq \reals^{k_0}$ and $A \subseteq \reals^k$ denote the set of actions for the major and a representative minor player respectively at a time $t \in [0, T]$, these sets being closed and convex. Denote by $\AAbb_0 $ and $\AAbb$ the set of strategy profiles for the major and minor player respectively. Here we assume that the major player's strategies are deterministic, whereas the representative minor player's strategy is progressively measurable w.r.t. $(\FF_t)_{\tinT}$. In either case, the strategy is assumed to be square integrable. For example, $\alpha = (\alpha_{t})_{\tinT} \in \AAbb$ if $\alpha : [0, T] \times \Omega \to A$, progressively measurable w.r.t. $(\FF_t)_{\tinT}$, and $\Ex \int_0^T |\alpha_t|^2 \dd t < \infty$, whereas $\alpha_0 \in \AAbb_0$ if $\alpha_0: [0, T] \to A_0$ and $\int_0^T |\alpha_{0t}|^2 \dd t < \infty$.

Consider the state processes of the major and representative minor player respectively:
\begin{align}
	\dd X_{0t}^{\mu} &= b_0(t, X_{0t}^{\mu}, \alpha_{0t}, \mu_t) \dd t, \label{eqn:stateprocessmajor} \\
	\dd X^{\mu}_t &= b(t, X_{0t}^{\mu}, X^{\mu}_t, \alpha_{0t}, \alpha_{t}, \mu_t) \dd t + \sigma(t, X^{\mu}_t) \dd B_t, \label{eqn:stateprocessminor}
\end{align}
as well as their McKean-Vlasov versions
\begin{align}
	\dd X_{0t} &= b_0(t, X_{0t}, \alpha_{0t}, \LL(X_t)) \dd t, \label{eqn:stateprocessmajorMKV} \\
	\dd X_t &= b(t, X_{0t}, X_t, \alpha_{0t}, \alpha_{t}, \LL(X_t)) \dd t + \sigma(t, X_t) \dd B_t, \label{eqn:stateprocessminorMKV}
\end{align}
where $b_0:[0, T] \times \reals^{d_0} \times A_0 \times \PP_2(\reals^d) \to \reals^{d_0}$, $b: [0, T] \times \reals^{d_0} \times \reals^d \times A_0 \times A \times \PP_2(\reals^d) \to \reals^d$ and $\sigma: [0, T] \times \reals^d \to \reals^{d \times m}$.

There is ostensibly a peculiarity here, namely the difference between the state processes \crefrange{eqn:stateprocessmajor}{eqn:stateprocessminor} vs their McKean-Vlasov counterparts \crefrange{eqn:stateprocessmajorMKV}{eqn:stateprocessminorMKV}. In this setting, $X$ and $X^{\mu}$ both refer to the state process of the representative minor player, the main difference being that \cref{eqn:stateprocessminor} has an arbitrary flow of probability measures $(\mu_t)_{\tinT} \subseteq \PP_2(\reals^d)$ in the measure argument, whereas in \cref{eqn:stateprocessminorMKV} the law of the process being solved for appears in the measure argument. It turns out that in the mean field strategy, the choice of whether one considers $X$ versus $X^{\mu}$ is crucial. Roughly speaking, the major player performs its optimisation whilst considering the system \crefrange{eqn:stateprocessmajorMKV}{eqn:stateprocessminorMKV}, whereas the representative minor player performs its optimisation whilst considering the system \crefrange{eqn:stateprocessmajor}{eqn:stateprocessminor} for some fixed flow of probability measures $(\mu_t)_{\tinT}$.

The following strategy, which we dub the Major/minor mean field game strategy (MmMFG strategy for short) outlines the overall mean field strategy for the major and representative minor players:
\begin{myenv}[Major/minor mean field game strategy (MmMFG strategy)]
\label{MmMFGstrategy}
\begin{enumerate}[label = \textbf{Step \arabic*}, ref =\textbf{\arabic*}]
\item[]
\item \label{MmMFG:Step1}
Fix a flow of probability measures $(\mu_t)_{\tinT} \subseteq \PP_2(\reals^d)$. 
\item \label{MmMFG:Step2}
Find a Nash equilibrium $(\hat \alpha_0^{\mu}, \hat \alpha^{\mu})$ to the following stochastic differential game:
\begin{align*}
	J_0(\alpha_0, \alpha) &= \Ex \left [\int_0^T f_0(t, X_{0t}, \alpha_{0t}, \LL(X_t)) \dd t + g_0(X_{0T}, \LL(X_T)) \right ], \\
	J^{\mu}(\alpha_0, \alpha) &= \Ex \left [ \int_0^T f (t, X_{0t}^{\mu}, X_t^{\mu}, \alpha_{0t}, \alpha_{t}, \mu_t) \dd t + g(X_{0T}^{\mu}, X_T^{\mu}, \mu_T) \right ],
\end{align*}
where we minimise $J_0$ over $\alpha_0 \in \AAbb_0$ and $J^{\mu}$ over $\alpha \in \AAbb$ subject to the following state process(es) of the major and representative minor player:
\begin{align*}
	\dd X_{0t}^{\mu} &= b_0(t, X_{0t}^{\mu}, \alpha_{0t}, \mu_t) \dd t, \\
	\dd X^{\mu}_t &= b(t, X_{0t}^{\mu}, X^{\mu}_t, \alpha_{0t}, \alpha_{t}, \mu_t) \dd t + \sigma(t, X^{\mu}_t) \dd B_t,
\end{align*}
as well as their McKean-Vlasov versions
\begin{align*}
	\dd X_{0t} &= b_0(t, X_{0t}, \alpha_{0t}, \LL(X_t)) \dd t, \\
	\dd X_t &= b(t, X_{0t}, X_t, \alpha_{0t}, \alpha_{t}, \LL(X_t)) \dd t + \sigma(t, X_t) \dd B_t.
\end{align*}
\item \label{MmMFG:Step3}
Solve the fixed point problem $\LL(\hat X^{\mu}_t) = \mu_t$. Here $\hat X^{\mu}_t$ refers to the state process $(X_t^{\mu})_{\tinT}$ along the Nash equilibrium $(\hat \alpha_0^{\mu}, \hat \alpha^{\mu})$.
\end{enumerate}
We call the fixed point $(\hat \mu_t)_{\tinT}$ a \emph{MmMFG equilibrium}.
\end{myenv}
We will say that the \hyperref[MmMFGstrategy]{MmMFG strategy} has a \emph{solution} if \textbf{Steps} \Crefrange{MmMFG:Step1}{MmMFG:Step3} can legitimately be carried out.

We note that the optimisation of the major player involves perturbations of $\alpha_0 \mapsto J_0(\alpha_0, \alpha)$. Moreover, $\alpha_0$ is the strategy of the major player, which by assumption should impact themself and the whole population of minor players. Thus perturbations of the nature $\alpha_0 \mapsto J_0(\alpha_0, \alpha)$ should induce changes in any measure arguments. In contrast to this, the optimisation of the representative minor player involves perturbations of $\alpha \mapsto J^{\mu}(\alpha_0, \alpha)$. However, since the population is assumed to be arbitrarily large, the representative minor player's individual contribution to the system is lost, and thus any perturbation of its strategy $\alpha$ should not affect the whole population; this is why we fix the measure flow as $\mu_t$ prior to optimisation. In the end however, we do want the measure $\mu_t$ to be the law of the state process, hence this leads to the fixed point step (\textbf{Step }\Cref{MmMFG:Step3}).

\begin{remark}
In our modelling framework, the state processes \cref{eqn:stateprocessmajor,eqn:stateprocessmajorMKV} corresponding to the major player do not possess a noise term, unlike that in \citep{carmona2016probabilistic}. We have done this for two reasons: (i) for the simplicity, so that the subsequent analysis does not require conditional expectations w.r.t. the major player's noise, (ii) since the examples that we study in \Cref{sec:ESGapplications} do not in fact include a state process for the major player. Indeed, we could have completely left out the major player's state process from this article, however we chose to include it since as long as there is no noise term, the subsequent analysis poses no extra challenge. In addition, for simplicity we have assumed that the diffusion coefficient $\sigma$ is independent of the strategies of both the major and representative minor player $\alpha_0, \alpha$, as well as the probability measure $\mu$. Despite this, in the rest of the article, it is still possible to include a noise term for the major player's state process and allow $\sigma$ to depend on $\alpha_0, \alpha, \mu$, however modifications will need to be made in the lines of \citep{carmona2016probabilistic}.
\end{remark}

The following Hamiltonians will be pivotal for the analysis. For $(t, x_0, x, y_0, y, z, \alpha_0, \alpha, \mu) \in [0, T] \times \reals^{d_0} \times \reals^d \times \reals^{d_0} \times \reals^d \times \reals^{d \times m} \times A_0 \times A \times \PP(\reals^d)$:
\begin{align}
	H_0(t, x_0, x, y_0, y, z, \alpha_0, \alpha, \mu) &= b_0(t, x_0, \alpha_0, \mu) \cdot y_0 + b(t, x_0, x, \alpha_0, \alpha, \mu) \cdot y \nonumber \\
	&\quad + \sigma(t, x) z + f_0(t, x_0, \alpha_0, \mu), \label{eqn:Ham_0} \\
	H(t, x_0, x, y, z, \alpha_0, \alpha, \mu) &= b(t, x_0, x, \alpha_0, \alpha, \mu) \cdot y + \sigma(t, x) \cdot z \nonumber \\
	&\quad + f(t, x_0, x, \alpha_0, \alpha, \mu), \label{eqn:Ham}
\end{align}
as well as their reduced versions respectively:
\begin{align}
	H_0^R(t, x_0, x, y_0, y, \alpha_0, \alpha, \mu) &= b_0(t, x_0, \alpha_0, \mu) \cdot y_0 + b(t, x_0, x, \alpha_0, \alpha, \mu) \cdot y \nonumber \\
	&\quad+ f_0(t, x_0, \alpha_0, \mu), \label{eqn:Ham_0_reduced} \\
	H^R(t, x_0, x, y, \alpha_0, \alpha, \mu) &= b(t, x_0, x, \alpha_0, \alpha, \mu) \cdot y + f(t, x_0, x, \alpha_0, \alpha, \mu). \label{eqn:Ham_reduced}
\end{align}
We will also introduce the following functions: 
\begin{align}
	\HH_0(t, x_0, y_0, z, \alpha_0, \alpha, \nu) &:= \int_{\reals^{d + d}} H_0(t, x_0, x, y_0, y, z, \alpha_0, \alpha, \mu) \nu(\dd x \times \dd y), \label{eqn:ExHam_0}\\
	\HH_0^R(t, x_0, y_0, \alpha_0, \alpha, \nu) &:= \int_{\reals^{d + d}} H_0^R(t, x_0, x, y_0, y, \alpha_0, \alpha, \mu) \nu(\dd x \times \dd y), \label{eqn:ExHam_0_reduced}
\end{align}
where $\nu \in \PP(\reals^{d + d})$ and $\mu$ refers to the marginal in the first $d$ components, namely $\mu(\cdot) := \nu(\cdot \times \reals^d)$. This means that for a random vector $(U_t, V_t)$ taking values in $\reals^{d + d}$, that 
\begin{align*}
	\HH_0(t, x_0, y_0, z, \alpha_0, \alpha, \LL(U_t, V_t)) &= \Ex [ H_0(t, x_0, U_t, y_0, V_t, z, \alpha_0, \alpha, \LL(U_t))], \\
	\HH_0^R(t, x_0, y_0, \alpha_0, \alpha, \LL(U_t, V_t)) &= \Ex [ H_0^R(t, x_0, U_t, y_0, V_t, \alpha_0, \alpha, \LL(U_t))].
\end{align*}
For simplicity, we may refer to $\HH_0$ (respectively $\HH_0^R$) as a Hamiltonian (respectively reduced Hamiltonian) as an abuse of terminology.
We will extensively utilise the notation $\ubar{x} := (x_0, x)$.

We will enforce the following assumption regarding the regularity of the drift, diffusion, running cost, and terminal cost coefficients above. For notational simplicity, in the following assumption, we will sometimes suppress arguments. As norms on finite dimensional spaces are equivalent, in the following assumption there is no need to worry about the precise definitions of any of the product norms. We refer the reader to \Cref{appen:Lderivative} for definitions of L differentiability of functions of probability measures and associated properties.
\begin{assumption}[Regularity of coefficients]
\label{ass:regularity}
\begin{enumerate}[label = (A\arabic*), ref = A\arabic*]
\item[]
\item 
There exists a constant $c > 0$ such that for all $t \in [0, T]$, $x_0', x_0 \in \reals^{d_0}$, $\alpha_0', \alpha_0 \in A_0$, $x', x \in \reals^d$, $\alpha', \alpha \in A$, $\mu', \mu \in \PP_2(\reals^d)$, we have:
\begin{align*}
	&|(b_0, \sigma_0)(t, x_0', \alpha_0', \mu') - (b_0, \sigma_0)(t, x_0, \alpha_0, \mu) | + |(b, \sigma)(t, x_0', x', \alpha_0', \alpha', \mu') - (b, \sigma)(t, x_0, x, \alpha_0, \alpha, \mu) | \\
	&\leq  L \big ( |x_0'  - x_0| + |x'  - x| + |\alpha_0'  - \alpha_0| + |\alpha'  - \alpha| + W_2(\mu', \mu) \big ).
\end{align*}
\item For any $\alpha_0 \in A_0$ and $\alpha \in A$:
\begin{align*}
	\int_0^T \left (|(b_0, \sigma_0)(t, 0, \alpha_0, \delta_0)|^2 + |(b, \sigma)(t, 0, 0, \alpha_0, \alpha, \delta_0)|^2 \right ) \dd t < \infty.
\end{align*}
\item There exists a constant $c_L > 0$ such that for all $t \in [0, T]$, $x_0', x_0 \in \reals^{d_0}$, $\alpha_0', \alpha_0 \in A_0$, $x', x \in \reals^d$, $\alpha', \alpha \in A$, $\mu', \mu \in \PP_2(\reals^d)$, we have:
\begin{align*}
	&|(f_0, g_0)(t, x_0', \alpha_0', \mu') - (f_0, g_0)(t, x_0, \alpha_0, \mu)| \\
	&\leq c_L \big ( 1 + |x_0'| + |x_0| + |\alpha_0'| + |\alpha_0| + M_2(\mu') + M_2(\mu) \big ) \\
	&\quad \times \big ( |x_0'  - x_0|  + |\alpha_0' - \alpha_0| + W_2(\mu', \mu) \big).
\end{align*}
and 
\begin{align*}
	&|(f, g)(t, x_0, x', \alpha_0, \alpha', \mu') - (f, g)(t, x_0, x, \alpha_0, \alpha, \mu) | \\
	&\leq c_L \big ( 1 + |x_0'| + |x_0| + |x'| + |x| + |\alpha_0'| + |\alpha_0| +  |\alpha'| + |\alpha| + M_2(\mu') + M_2(\mu) \big ) \\
	&\quad \times \big ( |x_0'  - x_0| + |x' - x| + |\alpha_0' - \alpha_0| + |\alpha' - \alpha| + W_2(\mu', \mu) \big).
\end{align*}

\item The maps $(x_0, \alpha_0) \mapsto b_0, b, f_0$ are differentiable, and the maps $(x_0, \alpha_0, \mu) \mapsto \partial_{x_0} (b_0, b, f_0)$ and $(x_0, \alpha_0, \mu) \mapsto \partial_{\alpha_0}(b_0, b, f_0)$ are continuous. The maps $(x, \alpha) \mapsto b, \sigma, f$ are differentiable, and the maps $(x, \alpha, \mu) \mapsto \partial_x(b, \sigma, f)$ and $(x, \alpha, \mu) \mapsto \partial_\alpha(b, f)$ are continuous. 

The maps $\mu \mapsto b_0, b, f_0$ are L differentiable, and the maps $(x_0, X, \alpha_0) \mapsto \partial_\mu (b_0, f_0)(t, x_0, \alpha_0, \LL(X))(X)$ are continuous.

The map $x_0 \mapsto g_0$ is differentiable, and the map $(x_0, \mu) \mapsto \partial_{x_0} g_0$ is continuous. The map $\mu \mapsto g_0$ is L differentiable, and the map $(x_0, X) \mapsto \partial_\mu g_0(x_0, \LL(X))(X)$ is continuous. The map $x \mapsto g$ is differentiable, and the mapping $(x_0, x, \mu) \mapsto \partial_x g$ is continuous. 

\end{enumerate}

\end{assumption}

\subsection{Finite player setting}
\label{sec:finiteplayersetting}
The \hyperref[MmMFGstrategy]{MmMFG strategy} is meant to serve as a mean field description of the following finite player framework, which is described through a stochastic differential game. Suppose we have $N+1$ players, where the $0$-th player corresponds to the major player, and players $1, \dots, N$ are the minor players. Let $\bs{B} = (B^1, \dots, B^N)$ denote a $Nd$-dimensional Brownian motion. Denote by $\AAbb_i$ the space of admissible strategies for the $i$-th minor player, namely strategies $\alpha^i: [0, T] \times \Omega \to A$ which are progressively measurable w.r.t. the augmented filtration generated by $B^i$, and $\Ex[\int_0^T |\alpha_t^i|^2 \dd t] < \infty$. The major player's admissible strategies remain elements of $\AAbb_0$. Suppose the players possess the following state processes:
\begin{align*}
	\dd X_{0t} &= b_0(t, X_{0t}, \alpha_{0t}, \bar \mu_{\bs{X}_t}) \dd t, \\
	\dd X_t^i &= b(t, X_{0t}, X_t^i, \alpha_{0t}, \alpha_t^i, \bar \mu_{\bs{X}_t^{-i}}) \dd t + \sigma(t, X_t^i) \dd B_t^i, \quad X_0^i = x_0^i, \quad i = 1, \dots, N,
\end{align*}
where $X_0$ refers to the state process of the major player, and $X^i$ refers to the state process of the $i$-th minor player. Moreover, we write $\bs{X} := (X^1, \dots, X^N)$ and $\alphabf := (\alpha^1, \dots, \alpha^N)$.

The utility function for the major player is given by:
\begin{align*}
	J_0(\alpha_0, \alphabf) &=  \Ex \left [ \int_0^T f_0(t, X_{0t}, \alpha_{0t}, \bar \mu_{\bs{X}_t}) \dd t + g_0(X_{0T}, \bar \mu_{\bs{X}_T} )\right ],
\end{align*}
and for the $i$-th minor player:
\begin{align*}
	J^i(\alpha_0, \alpha^i, \alphabf^{-i}) &= \Ex \left [ \int_0^T f(t, X_{0t}, X_t^i, \alpha_{0t}, \alpha_t^i, \bar \mu_{\bs{X}_t^{-i}}) \dd t + g(X_{0T}, X_T^i, \bar \mu_{\bs{X}_T^{-i}}) \right ], \quad i = 1, \dots, N.
\end{align*} 
The objective of the above $N+1$ player game is for the major player to minimise $\alpha_0 \mapsto J_0(\alpha, \bs{\alpha})$ and the minor players to minimise $\alpha^i \mapsto J^i(\alpha_0, \alpha^i, \bs{\alpha}^{-i})$ in the sense of Nash equilibrium. However, finding true Nash equilibria to this stochastic differential game is difficult. Put simply, this is because the finite player setting for large $N$ is highly interactive, and thus the corresponding mathematical system (be it PDE or FBSDE in nature) becomes highly coupled. Thus, the purpose of passing to the mean field regime is to increase tractability of the system, and by solving the \hyperref[MmMFGstrategy]{MmMFG strategy}, determine approximate Nash equilibria for the finite player setting. What we precisely mean by approximate Nash equilibria is given by the following definition, which is originally stated in \citep[][\S 4]{carmona2016probabilistic}:
\begin{definition}[$\vep$-Nash equilibrium]
\label{defn:epsilonNash}
Fix $\vep > 0$. A set of admissible controls $(\hat \alpha_0, \hat \alpha^1, \dots, \hat \alpha^N) \in \AAbb_0 \times \prod_{i = 1}^N \AAbb_i$ is called an $\vep$-Nash equilibrium for the above $(N+1)$-player stochastic differential game if for all $\alpha_0 \in \AAbb_0$ we have 
\begin{align*}
    J_0(\hat \alpha_0, \hat{\bs{\alpha}}) - \vep \leq J_0(\alpha_0, \hat{\bs{\alpha}})
\end{align*}
and for all $i = 1, \dots, N$ and $\alpha^i \in \AAbb_i$ we have 
\begin{align*}
    J^i(\hat \alpha_0, \hat \alpha^i, \hat{\bs{\alpha}}^{-i}) - \vep \leq J^i(\hat \alpha_0, \alpha^i, \hat{\bs{\alpha}}^{-i}).
\end{align*}
\end{definition}
In general, the purpose of studying the mean field description of a system is to construct $\vep_N$-Nash equilibria to the associated stochastic differential game, where $\vep_N$ is a sequence which tends to $0$ as $N \to \infty$ in a sufficiently fast manner. How one can precisely determine $\vep_N$-Nash equilibria in our setting is described in \Cref{sec:approximatenashequilibria}.

\section{A stochastic maximum principle}
\label{sec:smp}
\noindent
We now derive a stochastic maximum principle (SMP) for the \hyperref[MmMFGstrategy]{MmMFG strategy}. As is standard, we first consider necessary conditions. From there, we will be able to deduce extra assumptions that lead to sufficiency in \Cref{sec:sufficientSMP}.
\subsection{A necessary stochastic maximum principle}
\label{sec:necessarySMP}
\begin{assumption}[Convexity of Hamiltonians in strategy]
\label{ass:necessarysmp}
$\alpha_0 \mapsto \HH_0(t, x_0, y_0, z,  \alpha_0, \alpha, \nu)$ and 
$\alpha \mapsto H(t, x_0, x, y, z, \alpha_0, \alpha, \mu)$ are convex.
\end{assumption}

As we are deducing a necessary condition, we now assume that $(\hat \alpha_0^{\mu}, \hat \alpha^{\mu})$ is a Nash equilibrium, that is \textbf{Steps} \Crefrange{MmMFG:Step1}{MmMFG:Step2} of the \hyperref[MmMFGstrategy]{MmMFG strategy} are satisfied. At the moment, \textbf{Step} \Cref{MmMFG:Step3} is not yet considered.

Consider the following FBSDEs. For the major player's problem:
\begin{align}
\label{eqn:necessarySMPFBSDEmajor}
\begin{split}
	\dd X_{0t} &= b_0(t, X_{0t}, \alpha_{0t}, \LL(X_t)) \dd t, \\
	\dd X_t &= b(t, \ubar{X}_t,  \alpha_{0t}, \alpha_{t}, \LL(X_t)) \dd t + \sigma(t, X_t) \dd B_t, \\
	\dd P_{0t} &= - \partial_{x_0} H_0(t, \ubar{X}_t, \ubar{P}_t,  Q_t, \alpha_{0t}, \alpha_t, \LL(X_t)) \dd t \\
                    &\quad + Q_{0t} \dd B_t, \\
	\dd P_t &= - \Big [\partial_x H_0(t, \ubar{X}_t, \circ, P_t, Q_t, \alpha_{0t}, \alpha_t, \LL(X_t)) \\
			&\quad + \tilde \Ex[ \partial_{\mu} H_0(t, \ubar{\tilde X}_t, \ubar{\tilde P}_t, \tilde Q_t, \tilde \alpha_{0t}, \tilde \alpha_{t}, \LL(X_t))(X_t)] \Big] \dd t\\
			&\quad + Q_t \dd B_t, \\
	P_{0T} &=  \partial_{x_0} g_0(X_{0T}, \LL( X_T)) (X_T), \\
	P_{T} &= \partial_{\mu} g_0(X_{0T}, \LL( X_T)) (X_T).
\end{split}
\end{align}
For the representative minor player's problem:
\begin{align}
\begin{split}
\label{eqn:necessarySMPFBSDEminor}
	\dd X_{0t}^{\mu} &= b_0(t, X_{0t}^{\mu}, \alpha_{0t}, \mu_t) \dd t, \\
	\dd X^{\mu}_t &= b(t, \ubar{X}^{\mu}_t, \alpha_{0t}, \alpha_{t}, \mu_t) \dd t + \sigma(t, X^{\mu}_t) \dd B_t, \\
	\dd Y^{\mu}_t &= - \partial_x H(t, \ubar{X}^{\mu}_t, Y_{t}^{\mu}, Z_{t}^{\mu},  \alpha_{0t}, \alpha_t, \mu_{t}) \dd t \\
			&\quad + Z^{\mu}_t \dd B_t, \\
		Y^{\mu}_T &=  \partial_x g(X_{0T}^{\mu}, X^{\mu}_T, \mu_{T}).
\end{split}
\end{align}

\begin{theorem}[Necessary stochastic maximum principle for MmMFG strategy]
\label{thm:necessarySMPMm}
Enforce \fullrefs{ass:regularity}{ass:necessarysmp}. Let \textbf{Steps} \Crefrange{MmMFG:Step1}{MmMFG:Step2} of the \hyperref[MmMFGstrategy]{MmMFG strategy} be fulfilled, and denote by $(\hat \alpha^\mu_0, \hat \alpha^\mu)$ the associated Nash equilibrium. Let $(\hat {\ubar{X}}, \hat {\ubar{P}}, \hat{\ubar{Q}})$ and $(\hat {\ubar{X}}^\mu, \hat Y^\mu, \hat Z^\mu)$ denote the unique solutions to the FBSDEs \cref{eqn:necessarySMPFBSDEmajor} and \cref{eqn:necessarySMPFBSDEminor} respectively along the Nash equilibrium $(\hat \alpha_0^\mu, \hat \alpha^\mu)$. Then
\begin{align*}
	\Ex [ H_0^R(t, \hat{\ubar{X}}_t, \hat {\ubar{P}}_{t}, \hat \alpha_{0t}^\mu, \hat \alpha_{t}^\mu, \LL(\hat X_t)) ] &\leq \Ex [ H_0^R(t, \hat{\ubar{X}}_t, \hat {\ubar{P}}_{t}, \alpha_{0}, \hat \alpha_{t}^\mu, \LL(\hat X_t)) ]
\end{align*}
for all $\alpha_0 \in A_0$, $\dd t$ a.e. and
\begin{align*}
	H^R(t, \hat {\ubar{X}}_t^\mu, \hat Y^\mu_{t}, \hat \alpha_{0t}^\mu, \hat \alpha_{t}^\mu, \mu_t) &\leq H^R(t, \hat {\ubar{X}}_t^\mu, \hat Y_t^\mu, \hat \alpha^\mu_{0t}, \alpha, \mu_{t})
\end{align*}
for all $\alpha \in A$, $\dd \Pro \times \dd t$ a.e.
\end{theorem}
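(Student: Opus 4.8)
The plan is to prove each of the two inequalities separately by a convex (G\^ateaux) perturbation argument, exploiting that the action sets $A_0,A$ are convex and that, by \Cref{ass:necessarysmp}, the relevant Hamiltonians are convex in the strategy variable. For each player I freeze the opponent's equilibrium strategy, perturb the player's own strategy along an admissible direction, differentiate the associated cost at the equilibrium, and then use the corresponding adjoint FBSDE as a duality relation to eliminate the state-variation process. Minimality of the equilibrium forces the resulting directional derivative to be nonnegative, and convexity upgrades this to the stated pointwise minimum condition.

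For the major player, fix the minor strategy at $\hat\alpha^\mu$ and, for arbitrary $\alpha_0\in\AAbb_0$, consider the admissible convex perturbation $\alpha_0^\theta=\hat\alpha_0^\mu+\theta(\alpha_0-\hat\alpha_0^\mu)$, $\theta\in[0,1]$; write $\beta_0=\alpha_0-\hat\alpha_0^\mu$. Differentiating the forward McKean--Vlasov system in \cref{eqn:necessarySMPFBSDEmajor} in $\theta$ yields linear variation processes $(V_0,V)$ which, because the major player's perturbation moves the law $\LL(X_t)$, carry an L-derivative term of the form $\tilde\Ex[\partial_\mu b_0(\cdots)(\tilde X_t)\tilde V_t]$ (and the analogue for $b$), expressed through an independent copy. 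After verifying via the growth and Lipschitz bounds of \Cref{ass:regularity} (and dominated convergence) that $\theta\mapsto J_0$ is differentiable with the expected integral formula, I apply It\^o's product rule to $\hat{\ubar P}_t\cdot(V_{0t},V_t)$ along the adjoint part of \cref{eqn:necessarySMPFBSDEmajor}. The local drift and covariation terms cancel exactly against the running-cost derivatives, and --- this is the crux --- the $\partial_\mu H_0$ term built into the $\hat P$-drift together with the terminal condition $P_T=\partial_\mu g_0(\cdots)(X_T)$ annihilate the L-derivative contributions once I apply Fubini and the exchangeability of the i.i.d. copies. What survives is $\frac{d}{d\theta}J_0\big|_{0}=\int_0^T\Ex[\partial_{\alpha_0}H_0^R(t,\hat{\ubar X}_t,\hat{\ubar P}_t,\hat\alpha_{0t}^\mu,\hat\alpha_t^\mu,\LL(\hat X_t))]\cdot\beta_{0t}\,\dd t$, using $H_0^R$ from \cref{eqn:Ham_0_reduced} and that $\sigma z$ is $\alpha_0$-free.

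For the minor player the computation is strictly simpler: with $\mu_t$ frozen and $\hat\alpha_0^\mu$ fixed, the perturbation $\alpha^\theta=\hat\alpha^\mu+\theta(\alpha-\hat\alpha^\mu)$ leaves the major state untouched ($V_0^\mu\equiv0$, as $b_0$ is $\alpha$-free and the measure is frozen) and produces no McKean--Vlasov term. The duality obtained from It\^o's rule applied to $\hat Y_t^\mu\cdot V_t^\mu$ along \cref{eqn:necessarySMPFBSDEminor} is the classical single-agent computation and gives $\frac{d}{d\theta}J^\mu\big|_0=\Ex\int_0^T\partial_\alpha H^R(t,\hat{\ubar X}_t^\mu,\hat Y_t^\mu,\hat\alpha_{0t}^\mu,\hat\alpha_t^\mu,\mu_t)\cdot(\alpha_t-\hat\alpha_t^\mu)\,\dd t$, with $H^R$ from \cref{eqn:Ham_reduced}.

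Finally, minimality yields $\frac{d}{d\theta}J_0\big|_0\geq0$ and $\frac{d}{d\theta}J^\mu\big|_0\geq0$ over all admissible perturbations. A localization argument --- restricting $\beta_0$ (respectively $\beta$) to vanishing time intervals and to arbitrary measurable selections valued in $A_0$ (respectively $A$) --- converts these integral inequalities into $\Ex[\partial_{\alpha_0}H_0^R(\cdots)]\cdot(a-\hat\alpha_{0t}^\mu)\geq0$ for all $a\in A_0$, $\dd t$-a.e., and $\partial_\alpha H^R(\cdots)\cdot(a-\hat\alpha_t^\mu)\geq0$ for all $a\in A$, $\dd\Pro\times\dd t$-a.e. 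Since $H_0,H$ differ from $H_0^R,H^R$ only by the $\alpha$-free $\sigma z$ terms, \Cref{ass:necessarysmp} makes $\alpha_0\mapsto\Ex[H_0^R]$ and $\alpha\mapsto H^R$ convex, and the gradient inequalities then upgrade to the claimed minimum conditions. I expect the main obstacle to be the major player's McKean--Vlasov duality: the careful Fubini/exchangeability bookkeeping showing that the $\partial_\mu H_0$ drift and the $\partial_\mu g_0$ terminal data are exactly what is needed for the state-variation terms to cancel, the remaining work being the routine justification of differentiating under the expectation via \Cref{ass:regularity}.
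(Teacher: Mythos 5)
Your proposal is correct and follows essentially the same route as the paper's proof: convex perturbation of each player's strategy with the opponent frozen, variation processes (with the L-derivative terms for the major player, and $V_0^\mu\equiv 0$ for the minor player), duality via It\^o integration by parts against the adjoint equations in \cref{eqn:necessarySMPFBSDEmajor} and \cref{eqn:necessarySMPFBSDEminor}, then localization plus convexity of $\alpha_0\mapsto \Ex[H_0^R]$ and $\alpha\mapsto H^R$. You also correctly capture the crucial asymmetry the paper emphasises---the expectation cannot be removed for the major player because $\AAbb_0$ consists of deterministic strategies, whereas both the time integral and the expectation can be removed for the minor player---which is exactly why the two conclusions hold $\dd t$-a.e. and $\dd\Pro\times\dd t$-a.e. respectively.
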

%
\begin{proof}
Given in \Cref{appen:necessarySMPproof}.
\end{proof}
\subsection{A sufficient stochastic maximum principle}
\label{sec:sufficientSMP}
\noindent
A sufficient part of the stochastic maximum principle can be established under additional convexity conditions.
\begin{assumption}[Convexity of Hamiltonians and terminal costs]
\label{ass:convexity}
\;
\begin{enumerate}[label = (C\arabic*), ref = (C\arabic*)]
\item
\label{ass:convexity1}
The Hamiltonians $H_0$ and $H$ satisfy the following convexity assumptions. For all $x_0, x_0' \in \reals^{d_0}$, $x, x' \in \reals^d$, $\alpha_0, \alpha_0' \in A_0$, $\alpha, \alpha' \in A$, $\mu, \mu' \in \PP_2(\reals^d)$, as well as $t \in [0, T], y_0 \in \reals^{d_0}, y \in \reals^d$, there exists $\lambda_0, \lambda > 0$ such that
\begin{align*}
	&H_0(t, x_0', x', y_0, y, z, \alpha_0', \alpha, \mu') - H_0(t, x_0, x, y_0, y, \alpha_0, \alpha, \mu) \geq \partial_x H_0 \cdot (x' - x) \\&\quad + \partial_{x_0} H_0 \cdot (x_0' - x_0)  + \partial_{\alpha_0} H_0\cdot (\alpha_0' - \alpha_0) + \tilde \Ex[ \partial_\mu H_0(\tilde X)\cdot (\tilde X' - \tilde X)] + \frac{1}{2} \lambda_0| \alpha_0' - \alpha_0|^2, \\
	&H(t, x_0', x', y, z, \alpha_0, \alpha', \mu') - H(t, x_0, x, y, \alpha_0, \alpha, \mu) \geq \partial_{x_0} H \cdot (x_0' - x_0) \\&\quad + \partial_x H \cdot (x' - x) + \partial_\alpha H \cdot (\alpha' - \alpha) + \tilde \Ex[ \partial_\mu H(\tilde X) \cdot (\tilde X' - \tilde X)] + \frac{1}{2} \lambda| \alpha' - \alpha|^2,
\end{align*}
where $\tilde X, \tilde X'$ are defined on some probability space $(\tilde \Omega, \tilde \FF, \tilde \Pro)$, square integrable, and possess laws $\mu, \mu'$ respectively. Here we write $H_0 \equiv H_0(t, x_0, x, y_0, y, z, \alpha_0, \alpha, \mu)$ and $H \equiv H(t, x_0, x, y, z, \alpha_0, \alpha, \mu)$ for simplicity.
\item
The terminal costs $g_0$ and $g$ satisfy the following convexity conditions. For all $x_0, x_0' \in \reals^{d_0}$, $x, x' \in \reals^d$, $\mu, \mu' \in \PP_2(\reals^d)$,
\begin{align*}
	g_0(x_0', \mu') - g_0(x_0, \mu) &\geq \partial_{x_0} g_0(x_0, \mu)\cdot (x_0' - x_0) + \tilde \Ex [ \partial_\mu g_0(x_0, \mu)(\tilde X) \cdot (\tilde X' - \tilde X)], \\
	g(x_0, x', \mu) - g(x_0, x, \mu) &\geq \partial_x g(x_0, x, \mu) \cdot (x' - x),
\end{align*}
where $\tilde X, \tilde X'$ are defined on some probability space $(\tilde \Omega, \tilde \FF, \tilde \Pro)$, square integrable, and possess laws $\mu, \mu'$ respectively.
\end{enumerate}
\end{assumption} 
Clearly \Cref{ass:convexity} encompasses \Cref{ass:necessarysmp}.
\begin{assumption}[Minimisers of Hamiltonians]
\label{ass:minimisersofhamiltonians}
Functions 
\begin{align*}
    [0, T] \times \reals^{d_0} \times \reals^{d_0} \times \PP_2(\reals^{d + d}) \ni (t, x_0, y_0, \nu) \mapsto \mathring \alpha_0(t, x_0, y_0, \nu) \equiv \mathring \alpha_{0t}, \\
    [0, T] \times \reals^{d_0} \times \reals^d \times \reals^d \times \PP_2(\reals^{d}) \ni (t, x_0, x, y, \mu) \mapsto \mathring \alpha(t, x_0, x, y, \alpha_0, \mu) \equiv \mathring \alpha_t
\end{align*} 
uniquely exist such that
\begin{align}
	\HH_0^R(t, x_0, y_0, \mathring \alpha_{0t}, \alpha, \nu) &\leq \HH_0^R(t, x_0, y_0, \alpha_0, \alpha, \nu), \quad \forall \alpha_0 \in A_0, \label{eqn:HH0minimisation}\\
	H^R(t, x_0, x, y, \alpha_0, \mathring \alpha_t, \mu) &\leq H^R(t, x_0, x, y, \alpha_0, \alpha, \mu), \quad \forall \alpha \in A. \label{eqn:Hminimisation}
\end{align}
\end{assumption}
Actually, unique existence of these minimisers is guaranteed through Assumption \Cref{ass:convexity1}, since $\alpha \mapsto H^R$ and $\alpha_0 \mapsto \HH_0^R$ are strongly convex and continuous. However, we do still need to make the assumption that $\mathring \alpha_0$ does not depend on $\alpha$.
\begin{remark}
In the existing literature, the analogous assumption to \fullref{ass:minimisersofhamiltonians} is stricter. In fact, in \citep[][pp 1545 \& 1547]{carmona2016probabilistic}, they assume the following more stringent assumptions, that:
\begin{itemize}
\item the minimiser $\mathring \alpha$ depends only on $(t, x_0, x, y, \mu)$, and therefore is independent of $\alpha_0$.
\item whereas the minimiser $\mathring \alpha_0$ depends only on $(t, x_0, y_0, \mu)$, and is therefore independent of the second marginal of $\nu$.
\end{itemize} 
Although seemingly subtle, we believe our relaxed assumptions are substantially beneficial for real world applications. Indeed, assuming that the minimisers $\mathring \alpha$ and $\mathring \alpha_0$ are independent of of $\alpha_0$ and the second marginal of $\nu$ respectively precludes many desirable models, namely models with non-trivial interaction between the major and representative minor player. Moreover, our relaxed assumptions intuitively make sense, as it seems plausible that the optimal strategy of the minor player can depend on that of the major player, but not the other way around. Indeed, it is plain to see that the practical examples we choose to investigate in \Cref{sec:ESGapplications} would not be possible to formulate without our relaxed assumptions. 
\end{remark}

\begin{theorem}[Sufficient stochastic maximum principle for MmMFG strategy]
\label{thm:sufficientSMPMm}
Enforce Assumptions \partref{ass:regularity}, \partref{ass:convexity}, and \partref{ass:minimisersofhamiltonians}. Let $\bar \alpha_{0t} := \mathring \alpha_0(t, X_{0t}, P_{0t}, \LL(X_t, P_t))$ and $ \bar \alpha_{t} := \mathring \alpha(t, X_{0t}, X_t, Y_t, \bar \alpha_{0t}, \LL(X_t))$. Consider the FBSDE
\begin{align}
\label{eqn:FBSDEMmsufficient}
\begin{split}
	\dd X_{0t} &= b_0(t, X_{0t}, \bar \alpha_{0t}, \LL(X_t)) \dd t, \\
	\dd X_t &= b(t, \ubar{X}_t,  \bar \alpha_{0t}, \bar \alpha_{t}, \LL(X_t)) \dd t + \sigma(t, X_t) \dd B_t, \\
	\dd P_{0t} &= - \partial_{x_0} H_0(t, \ubar{X}_t, \ubar{P}_t,  Q_t, \bar \alpha_{0t}, \bar \alpha_t, \LL(X_t))\dd t \\
                    &\quad+ Q_{0t} \dd B_t, \\
	\dd P_t &= - \Big [\partial_x H_0(t, \ubar{X}_t, \circ, P_t,  Q_t, \bar \alpha_{0t}, \bar \alpha_t, \LL(X_t)) \\
			&\quad + \tilde \Ex[ \partial_{\mu} H_0(t, \ubar{\tilde X}_t, \ubar{\tilde P}_t, \tilde Q_t, \tilde{\bar \alpha}_{0t}, \tilde{\bar \alpha}_{t}, \LL(X_t))(X_t)] \Big] \dd t\\
			&\quad + Q_t \dd B_t, \\
	\dd Y_t &= - \partial_x H(t, \ubar{X}_t, Y_t, Z_t, \bar \alpha_{0t}, \bar \alpha_t, \LL(X_t)) \dd t \\
			&\quad + Z_t \dd B_t,
\end{split}
\end{align}
with conditions 
\begin{align}
\label{eqn:FBSDEMmsufficientcond}
\begin{split}
	P_{0T} &=  \partial_{x_0} g_0(X_{0T}, \LL( X_T)), \quad P_{T} =  \partial_{\mu} g_0(X_{0T}, \LL( X_T)) (X_T), \\ 
	Y_T &=  \partial_x g(\ubar{X}_T, \LL(X_T)).
\end{split}
\end{align}
Assume the above FBSDE \crefrange{eqn:FBSDEMmsufficient}{eqn:FBSDEMmsufficientcond} has a unique solution denoted by $(\ubar{\hat{X}}, \ubar{\hat{P}}, \hat Y, \ubar{\hat{Q}}, \hat Z)$. Let $\hat \alpha_{0t} := \mathring \alpha_0(t, \hat X_{0t}, \hat P_{0t}, \LL(\hat X_t, \hat P_t))$ and $\hat \alpha_t := \mathring \alpha(t, \hat X_{0t}, \hat X_t, \hat Y_t, \hat \alpha_{0t}, \LL(\hat X_t))$. Then $(\hat \alpha_0, \hat \alpha)$ is a Nash equilibrium for the \hyperref[MmMFGstrategy]{MmMFG strategy}.
\end{theorem}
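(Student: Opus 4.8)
The plan is to verify directly that the pair $(\hat\alpha_0,\hat\alpha)$ realises \textbf{Steps} \Crefrange{MmMFG:Step2}{MmMFG:Step3} of the \hyperref[MmMFGstrategy]{MmMFG strategy} once the measure flow is frozen at $\mu_t=\LL(\hat X_t)$. \textbf{Step} \Cref{MmMFG:Step3} comes for free: every measure slot in the forward part of \cref{eqn:FBSDEMmsufficient} already carries the law $\LL(X_t)$ of the solution itself, so after freezing $\mu_t=\LL(\hat X_t)$ the representative minor player's frozen-measure state equation is \emph{identical} to the one solved by $\hat X$; by the assumed uniqueness $\hat X^\mu=\hat X$, hence $\LL(\hat X^\mu_t)=\mu_t$ and the fixed point holds. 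It remains to prove the two best-response inequalities of the Nash equilibrium in \textbf{Step} \Cref{MmMFG:Step2}: $J_0(\hat\alpha_0,\hat\alpha)\le J_0(\alpha_0,\hat\alpha)$ for all $\alpha_0\in\AAbb_0$, and $J^\mu(\hat\alpha_0,\hat\alpha)\le J^\mu(\hat\alpha_0,\alpha)$ for all $\alpha\in\AAbb$. Both follow the convexity/adjoint template of the sufficient Pontryagin principle, with the adjoint processes supplied by \cref{eqn:FBSDEMmsufficient}; the subtlety lies entirely on the major player's side.

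I would dispatch the representative minor player first, as its half is classical. With $\hat\alpha_0$ and $\mu_t=\LL(\hat X_t)$ held fixed, and since the major state equation carries no minor input (so the major state is insensitive to a minor deviation and $X^\mu_0=\hat X_0$), the minor player faces an ordinary controlled diffusion with adjoint pair $(\hat Y,\hat Z)$. Writing $\Delta X_t:=X^\mu_t-\hat X_t$ (so $\Delta X_0=0$), I bound the terminal cost below by the convexity of $g$ in Assumption \Cref{ass:convexity}, apply It\^o's formula to $\hat Y_t\cdot\Delta X_t$, and rewrite the running-cost difference through the definition \cref{eqn:Ham} of $H$. The boundary term $\hat Y_T\cdot\Delta X_T$ cancels, leaving
\begin{align*}
	J^\mu(\hat\alpha_0,\alpha)-J^\mu(\hat\alpha_0,\hat\alpha)\ \ge\ \Ex\Big[\int_0^T\big(\Delta H_t-\partial_x H\cdot\Delta X_t\big)\,\dd t\Big],
\end{align*}
where $\Delta H_t$ is the increment of $H$, taken at the common adjoint $(\hat Y_t,\hat Z_t)$, between the perturbed and the equilibrium state/control arguments. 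The convexity of $H$ (Assumption \Cref{ass:convexity}, with $x_0$ and $\mu$ common to both arguments) dominates the integrand by $\partial_\alpha H\cdot(\alpha_t-\hat\alpha_t)+\tfrac12\lambda|\alpha_t-\hat\alpha_t|^2$; since $\hat\alpha_t$ minimises $\alpha\mapsto H^R$ by \cref{eqn:Hminimisation} and $\partial_\alpha H=\partial_\alpha H^R$ (as $\sigma$ is control-free), the first-order condition gives $\partial_\alpha H\cdot(\alpha_t-\hat\alpha_t)\ge0$. Hence the difference is at least $\tfrac\lambda2\Ex\int_0^T|\alpha_t-\hat\alpha_t|^2\,\dd t\ge0$.

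For the major player the argument is genuinely of McKean--Vlasov type and is where the relaxed Assumption \Cref{ass:minimisersofhamiltonians} is needed. Fixing the minor control at the equilibrium process $\hat\alpha$ and perturbing $\alpha_0$, both the deterministic major state $X_0$ and the representative minor state $X$ — together with its law $\LL(X_t)$ entering the drift and costs — respond. With $\Delta X_{0t}:=X_{0t}-\hat X_{0t}$ and $\Delta X_t:=X_t-\hat X_t$, I bound the terminal difference below by the joint convexity of $g_0$ in $(x_0,\mu)$ from Assumption \Cref{ass:convexity}, matching the $x_0$- and measure-derivative contributions to $\hat P_{0T}\cdot\Delta X_{0T}$ and $\Ex[\hat P_T\cdot\Delta X_T]$ via the terminal conditions $P_{0T}=\partial_{x_0}g_0$, $P_T=\partial_\mu g_0(\cdots)(X_T)$ and an independent-copy relabelling. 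Applying It\^o's formula to $\hat P_{0t}\cdot\Delta X_{0t}+\hat P_t\cdot\Delta X_t$, rewriting the running-cost difference through \cref{eqn:Ham_0}, and invoking the convexity of $H_0$ (Assumption \Cref{ass:convexity}), all state and boundary terms cancel; crucially, the two $\partial_\mu H_0$ contributions — one from the convexity inequality, one from the nonlocal drift of the $P$-equation in \cref{eqn:FBSDEMmsufficient} — are interchanged by the independent-copy symmetry and cancel in expectation. What survives is
\begin{align*}
	J_0(\alpha_0,\hat\alpha)-J_0(\hat\alpha_0,\hat\alpha)\ \ge\ \Ex\Big[\int_0^T\Big(\partial_{\alpha_0}H_0\cdot(\alpha_{0t}-\hat\alpha_{0t})+\tfrac12\lambda_0|\alpha_{0t}-\hat\alpha_{0t}|^2\Big)\,\dd t\Big].
\end{align*}
Because $\alpha_{0t}-\hat\alpha_{0t}$ is deterministic and $H_0^R$ is affine in its adjoint slot with the deterministic coefficient $b_0$, taking expectations turns $\Ex[\partial_{\alpha_0}H_0]=\Ex[\partial_{\alpha_0}H_0^R]$ into $\partial_{\alpha_0}\HH_0^R$ evaluated along the equilibrium (with $\Ex[\hat P_{0t}]$ in its $y_0$-slot and $\LL(\hat X_t,\hat P_t)$ in its measure slot). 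The $\HH_0^R$-minimality \cref{eqn:HH0minimisation} of $\hat\alpha_{0t}=\mathring\alpha_0(\cdots)$ over the convex set $A_0$ then gives $\partial_{\alpha_0}\HH_0^R\cdot(\alpha_{0t}-\hat\alpha_{0t})\ge0$, and with the quadratic term $J_0(\alpha_0,\hat\alpha)\ge J_0(\hat\alpha_0,\hat\alpha)$ follows. It is exactly the hypothesis of Assumption \Cref{ass:minimisersofhamiltonians} that $\mathring\alpha_0$ is \emph{independent of} $\alpha$ which legitimises this step: it guarantees that $\hat\alpha_{0t}$ is still the $\HH_0^R$-minimiser once the minor control is frozen at $\hat\alpha$, so that the nested prescription ``$\bar\alpha_0$ then $\bar\alpha$'' in the statement is internally consistent.

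The main obstacle is the major player's inequality, on two counts. First, the McKean--Vlasov coupling forces the matching and cancellation of the two measure-derivative terms through the independent-copy/Fubini identity
\begin{align*}
	\Ex\tilde\Ex\big[\partial_\mu H_0(\Theta_t)(\tilde{\hat X}_t)\cdot\Delta\tilde X_t\big]\ =\ \Ex\tilde\Ex\big[\partial_\mu H_0(\tilde\Theta_t)(\hat X_t)\cdot\Delta X_t\big],
\end{align*}
where $\Theta_t$ abbreviates the equilibrium argument tuple and $\tilde\Theta_t$ its independent copy; this must be set up with care because the Lions derivative in the convexity estimate and the nonlocal drift of the adjoint are evaluated at different copies. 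Second, and conceptually more important, the major player optimises the \emph{expected} reduced Hamiltonian $\HH_0^R$ rather than a pathwise Hamiltonian (unlike the minor player), so reconciling the pointwise $\partial_{\alpha_0}H_0$ thrown up by the convexity estimate with the $\HH_0^R$-minimality of $\hat\alpha_0$ hinges on the determinism of the major control and the affine dependence of $H_0^R$ on the adjoint. I expect the bookkeeping around these measure terms, together with the verification $\hat X^\mu=\hat X$ underpinning the fixed point, to be the only delicate points; the minor player's half and the boundary cancellations are routine once the adjoint system \cref{eqn:FBSDEMmsufficient} is available.
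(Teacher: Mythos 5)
Your proposal is correct and follows essentially the same route as the paper: the paper's own proof is only a pointer (``follow the ideas from the proof of \Cref{thm:necessarySMPMm} and adapt the sufficiency arguments of Carmona--Wan and Carmona--Zhu''), and what you write out --- the classical convexity/adjoint argument for the representative minor player with frozen $(\hat\alpha_0,\mu)$, the McKean--Vlasov version for the major player with the independent-copy/Fubini cancellation of the $\partial_\mu H_0$ terms against the nonlocal drift of the $P$-equation, the reduction of the major player's first-order condition to $\HH_0^R$-minimality via determinism of $\alpha_0$ and the $\alpha$-independence of $\mathring\alpha_0$ from \Cref{ass:minimisersofhamiltonians}, and the fixed point $\LL(\hat X^\mu_t)=\LL(\hat X_t)$ by uniqueness of the frozen-measure state equation --- is exactly the content of those cited arguments adapted to this setting. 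Your identification of the major player's side (expected rather than pathwise Hamiltonian, hence the $\dd t$-a.e.\ condition only after taking expectations) as the genuinely delicate point matches the asymmetry the paper itself emphasises in the remark closing \Cref{appen:necessarySMPproof}.
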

\begin{proof}
The proof can be carried out by following the ideas from our proof of \Cref{thm:necessarySMPMm}, and then adapting the arguments made in \citep[][Theorem 4.7]{carmona2015forward} as well as \citep[][Appendix]{carmona2016probabilistic} to deduce sufficiency.
\end{proof}
\begin{remark}
Establishing wellposedness for the FBSDE \crefrange{eqn:FBSDEMmsufficient}{eqn:FBSDEMmsufficientcond}, which we note is of McKean-Vlasov type, is in general, a very difficult task, and beyond the scope of this article. It is plausible to conjecture that in the linear-quadratic setting, a general solvability result can be achieved similar to that found in \citep[][\S 5]{carmona2016probabilistic}. However, the presence of the argument $\alpha_0$ within the minimiser $\mathring \alpha$ poses additional problems. Instead, we consider particular examples in \Cref{sec:ESGapplications} where solvability is possible, and we leave general wellposedness results of this FBSDE for future work.
\end{remark}
\subsection{Approximate Nash equilibria for the finite player game}
\label{sec:approximatenashequilibria}
Assuming that the \hyperref[MmMFGstrategy]{MmMFG strategy} can be solved, we can determine $\vep_N$-Nash equilibria in the sense of \Cref{defn:epsilonNash} for the finite player game outlined in \Cref{sec:finiteplayersetting} through the sufficient stochastic maximum principle \Cref{thm:sufficientSMPMm}. We will be relatively informal in this section, and more precise details can be found in \citep[][\S 4]{carmona2016probabilistic}.

To be precise, let $\hat \mu_t$ denote the MmMFG equilibrium found in \textbf{Step} \Cref{MmMFG:Step3} of the \hyperref[MmMFGstrategy]{MmMFG strategy}. Enforce all assumptions in \Cref{thm:sufficientSMPMm} so that it holds, and denote the unique solution of the FBSDE \crefrange{eqn:FBSDEMmsufficient}{eqn:FBSDEMmsufficientcond} by $(\ubar{\hat{X}}, \ubar{\hat{P}}, \hat Y, \ubar{\hat{Q}}, \hat Z)$. Assume also that decoupling fields $\theta_P: [0, T] \times \reals^d$ and $\theta_Y : [0, T] \times \reals^{d_0} \times \reals^d$ exist which are Lipschitz continuous in the space variables, uniformly in $t \in [0, T]$, and such that 
\begin{align*}
    P_t = \theta_P(t, X_t), \\
    Y_t = \theta_Y(t, X_{0t}, X_t).
\end{align*}
Consider the SDEs: 
\begin{align*}
    \dd \check X_{0t} &= b_0(t, \check X_{0t}, \check \alpha_{0t}, \bar \mu_{\check{\bs{X}}_t}) \dd t, \\
    \dd \check X_t^i &= b(t, \check X_{0t}, \check X_t^i, \check \alpha_{0t}, \check \alpha_t^i, \bar \mu_{\check{\bs{X}}_t}) \dd t + \sigma(t, \check X_t^i) \dd B_t^i, \quad X_0^i = x_0^i, \quad i = 1, \dots, N,
\end{align*}
where $\check \alpha_{0t} := \mathring \alpha_0(t, \hat X_{0t}, \hat P_{0t}, \hat \mu_t \circ (I_d, \theta_P(t, \cdot)^{-1})$ and $\check \alpha_t^i := \mathring \alpha(t, \hat X_{0t}, \check X_t^i, \theta_Y(t, X_{0t}, \check X_t^i), \check \alpha_{0t}, \hat \mu_t)$. By adapting arguments given in the proof of \citep[][Theorem 4.1]{carmona2016probabilistic}, it can shown that $(\check \alpha_{0t}, (\check \alpha_t^i)_{1 \leq i \leq N})$ forms an $\vep_N$-Nash equilibrium for the $N + 1$ player game. Moreover, under additional regularity assumptions, the speed of convergence can also be described.

\section{Enlarging the state space}
\label{sec:enlargingstatespace}
\noindent
In this section, we consider an extension of our modelling framework. Namely, we invoke the model framework from \Cref{sec:modelframework}, however with one important distinction. Here, we will enlarge the McKean-Vlasov state process of the representative minor player from $X_t$ (originally given in \cref{eqn:stateprocessminorMKV}) to $\chi_t = (X_t; \gamma_t) \in \reals^{d + k}$, where 
\begin{align*}
	\dd X_t &= b(t, X_{0t}, X_t, \gamma_t, \alpha_{0t}, \alpha_t, \LL(X_t, \gamma_t)) \dd t + \sigma(t, X_t, \gamma_t) \dd B_t,  \\
	\dd \gamma_t &= \alpha_t \dd t,
\end{align*} 
which can be compactly written as
\begin{align*}
	\dd \chi_t &= B(t, X_{0t}, X_t, \gamma_t,  \alpha_{0t}, \alpha_t, \LL(X_t, \gamma_t)) \dd t + \Sigma(t, X_t, \gamma_t) \dd B_t
\end{align*} 
with $B(t, x_0, x, \gamma, \alpha_0, \alpha, \lambda) = (b(t, x_0, x, \gamma,  \alpha_0, \alpha, \lambda); \alpha_t)$ and $\Sigma(t, x, \gamma)  = (\sigma(t, x, \gamma); \bs{0}_{k \times m})$. In order to distinguish this setting from the original modelling framework, we use $\lambda$ to denote a generic element of $\PP_2(\reals^{d + k})$ rather than $\mu$. Furthermore, we will enlarge the state process of the representative minor player (originally given in \cref{eqn:stateprocessminor}) with the fixed flow of probability measures $(\lambda_t)_{\tinT} \subseteq \PP_2(\reals^{d + k})$ in a similar way to the McKean-Vlasov case to obtain $\chi^\lambda_t = (X_t^\lambda; \gamma_t)$. As $\alpha_t$ is the strategy of the representative minor player, $\gamma_t$ can be thought of as their integrated strategy. The purpose of enlarging the state process in this way is to include the joint distribution $\LL(X_t, \gamma_t)$ in the measure arguments. This will be useful in \Cref{sec:ESGapplications}, as we do wish to include the law of the representative firm's integrated strategy in our model. This idea is similar to the framework of extended mean field games, where rather than enlarging the state space, one includes $\LL(X_t, \alpha_t)$ (the joint distribution of the representative minor player's state process and strategy) in measure arguments. However, the fixed point step in the framework of extended mean field games is typically more difficult to tackle. Therefore, we opt to enlarge the state space in this manner to enable us to include the joint distribution $\LL(X_t, \gamma_t)$. Despite $\chi_t$ being the state process of the representative minor player, in the following we will prefer to write the instances of $X_t$ and $\gamma_t$ individually in order to emphasise these components. 

In this new setting, the state processes can be described as:
\begin{align}
	\dd X^\lambda_{0t} &= b_0(t, X^\lambda_{0t}, \alpha_{0t},\lambda_t) \dd t, \\
	\dd X^\lambda_t &= b(t, X^\lambda_{0t}, X^\lambda_t, \gamma_t, \alpha_{0t}, \alpha_t, \lambda_t) \dd t + \sigma(t, X^\lambda_t, \gamma_t) \dd B_t, \\
        \dd X_{0t} &= b_0(t, X_{0t}, \alpha_{0t}, \LL(X_t, \gamma_t)) \dd t, \\
	\dd X_t &= b(t, X_{0t}, X_t, \gamma_t, \alpha_{0t}, \alpha_t, \LL(X_t, \gamma_t)) \dd t + \sigma(t, X_t, \gamma_t) \dd B_t, \\
	\dd \gamma_t &= \alpha_t \dd t,
\end{align}
and the utility functions as:
\begin{align*}
	J_0(\alpha_0, \alpha) &= \Ex \left [\int_0^T f_0(t, X_{0t}, \alpha_{0t}, \LL(X_t, \gamma_t)) \dd t + g_0(X_{0T}, \LL(X_T, \gamma_T)) \right ], \\
	J^\lambda(\alpha_0, \alpha) &= \Ex \left [ \int_0^T f(t, X^\lambda_{0t}, X^\lambda_t, \gamma_t, \alpha_{0t}, \alpha_t, \lambda_t) \dd t + g(X^\lambda_{0T}, X^\lambda_T, \gamma_T, \lambda_T) \right ].
\end{align*}
For $(t, x_0, x, \gamma, y_0, y, \grave{y}, \alpha_0, \alpha, \lambda) \in [0, T] \times \reals^{d_0} \times \reals^d \times \reals^k \times \reals^{d_0} \times \reals^d \times \reals^k  \times A_0 \times A \times \PP_2(\reals^{d + k})$, the Hamiltonians are given by: 
\begin{align*}
	H_0(t, x_0, x, \gamma, y_0, y, \grave{y}, z, \alpha_0, \alpha, \lambda) &= b_0(t, x_0, \alpha_0, \lambda) \cdot y_0 + b(t, x_0, x, \gamma, \alpha_0, \alpha, \lambda) \cdot y + \alpha \cdot \grave{y} + \sigma(t, x, \gamma) \cdot z\\
	&\quad + f_0(t, x_0, \alpha_0, \lambda),\\
	H(t, x_0, x, \gamma, y, \grave{y}, z, \alpha_0, \alpha, \lambda) &= b(t, x_0, x, \alpha_0, \alpha, \lambda) \cdot y + \alpha \cdot \grave{y} + \sigma(t, x, \gamma) \cdot z + f(t, x_0, x, \gamma, \alpha_0, \alpha, \lambda),
\end{align*}
and their reduced versions respectively:
\begin{align*}
	H^R_0(t, x_0, x, \gamma, y_0, y, \grave{y}, \alpha_0, \alpha, \lambda) &= b_0(t, x_0, \alpha_0, \lambda) \cdot y_0 + b(t, x_0, x, \gamma, \alpha_0, \alpha, \lambda) \cdot y + \alpha \cdot \grave{y} + f_0(t, x_0, \alpha_0, \lambda),\\
	H^R(t, x_0, x, \gamma, y, \grave{y}, \alpha_0, \alpha, \lambda) &= b(t, x_0, x, \gamma, \alpha_0, \alpha, \lambda) \cdot y + \alpha \cdot \grave{y} + f(t, x_0, x, \gamma, \alpha_0, \alpha, \lambda).
\end{align*}

Following the same steps from \Cref{sec:sufficientSMP}, and noting that $\chi_t$ and $\chi_t^\lambda$ are the representative minor player's state processes, not $X_t$ and $X_t^\lambda$, the FBSDE derived from the sufficient part of the stochastic maximum principle, namely \crefrange{eqn:FBSDEMmsufficient}{eqn:FBSDEMmsufficientcond}, reads as follows:
\begin{align}
\begin{split}
\label{eqn:FBSDEMmsufficientenlarged}
	\dd X_{0t} &= b_0(t, X_{0t}, \bar \alpha_{0t}, \LL(X_t, \gamma_t)) \dd t, \\
	\dd X_t &= b(t, X_{0t}, X_t, \gamma_t, \bar \alpha_{0t}, \bar \alpha_t, \LL(X_t, \gamma_t)) \dd t +                  \sigma(t, X_t, \gamma_t) \dd B_t, \\
	\dd \gamma_t &= \alpha_t \dd t, \\
	\dd P_{0t} &= - \partial_{x_0} H_0(t, X_{0t}, X_t, \gamma_t, P_{0t}, P_t, \grave{P}_t, Q_t, \bar \alpha_{0t}, \bar \alpha_t, \LL(X_t, \gamma_t)) \dd t \\
    &\quad + Q_{0t} \dd B_t, \\
	\dd \begin{pmatrix} P_t \\ \grave{P}_t \end{pmatrix} &= - \Bigg [ \partial_{(x, \gamma)} H_0(t, X_{0t}, X_t, \gamma_t, \circ, P_t, \grave{P}_t, Q_t, \bar \alpha_{0t}, \bar \alpha_t, \LL(X_t, \gamma_t)) \\
	&\quad + \tilde \Ex [\partial_\lambda H_0(t, \tilde X_{0t}, \tilde X_t, \tilde \gamma_t, \tilde P_{0t}, \tilde P_t, \tilde{\grave{P}}_t, \tilde Q_t, \tilde{\bar{\alpha}}_{0t}, \tilde{\bar{\alpha}}_t, \LL(X_t, \gamma_t))(X_t; \gamma_t)] \Bigg ] \dd t \\
        &\quad + \begin{pmatrix}Q_t \\ \grave{Q}_t \end{pmatrix} \dd B_t,\\
	\dd \begin{pmatrix} Y_t \\ \grave{Y}_t \end{pmatrix} &=  - \partial_{(x, \gamma)} H(t, X_{0t}, X_t, \gamma_t, Y_t, \grave{Y}_t, Z_t, \bar \alpha_{0t}, \bar \alpha_t, \LL(X_t, \gamma_t)) \dd t \\
    &\quad + \begin{pmatrix}Z_t \\ \grave{Z}_t \end{pmatrix} \dd B_t,
\end{split}
\end{align}
with conditions
\begin{align}
\label{eqn:FBSDEMmsufficientenlargedcond}
\begin{split}
	P_{0T} &=  \partial_{x_0} g_0(X_{0T}, \LL(X_T, \gamma_T)), \quad \begin{pmatrix} P_T \\ \grave{P}_T \end{pmatrix} =  \partial_{\lambda} g_0(X_{0T}, \LL( X_T, \gamma_T)) (X_T; \gamma_T), \\ 
	\begin{pmatrix} Y_T \\ \grave{Y}_T \end{pmatrix} &=  \partial_{(x, \gamma)} g(\ubar{X}_T, \gamma_T, \LL(X_T, \gamma_T)).
\end{split}
\end{align}

\section{Applications to Green and sustainable Finance}
\label{sec:ESGapplications}
\noindent
In this section, we consider a problem pertaining to Green and sustainable finance inspired by \citep{lavigne2023decarbonization} as well as \citep{dayanikli2024multi}. The problem involves a large number of statistically identical carbon-emitting firms (the minor players) who wish to maximise their stock price, and a regulator (the major player) who wishes for the economy to flourish, whilst at the same time will penalise firms who behave non-green. Of course, we wish to model this setting through the \hyperref[MmMFGstrategy]{MmMFG strategy} and utilise the power of \Cref{thm:sufficientSMPMm}. The representative firm's strategy $\alpha$ corresponds to their emission rate, whereas the regulator's strategy $\alpha_0$ corresponds to a taxation rate. The firm's state process $X$ corresponds to the log of their stock price, which is adjusted due to the regulator's taxation rate. For simplicity, we assume that interest rates are $0$. Importantly, the regulator possesses no state process, and thus can be thought of as a principal (although, not a leader like in a Stackelberg game). Therefore, everything involving $x_0, y_0, X_0, P_0, Q_0$ disappears from the modelling framework. Ergo, in the following we will omit these arguments. To be precise, removing the major player's state process modifies the FBSDE \crefrange{eqn:FBSDEMmsufficientenlarged}{eqn:FBSDEMmsufficientenlargedcond} to the following:
\begin{align}
\label{eqn:FBSDEMmsufficientenlargednomajor}
\begin{split}
	\dd X_t  &= b(t, X_t, \gamma_t, \bar \alpha_{0t}, \bar \alpha_t, \LL(X_t, \gamma_t)) \dd t + \sigma(t, X_t, \gamma_t) \dd B_t, \\
	\dd \gamma_t &= \bar \alpha_t \dd t, \\
	\dd \begin{pmatrix} P_t \\ \grave{P}_t \end{pmatrix} &= - \Bigg [ \partial_{(x, \gamma)} H_0(t, X_t, \gamma_t, P_t, \grave{P}_t, \bar \alpha_{0t}, \bar \alpha_t, \LL(X_t, \gamma_t)) \\
	\quad &+ \tilde \Ex [ \partial_\lambda H_0(t, \tilde X_t, \tilde \gamma_t, \tilde P_t, \tilde{\grave{P}}_t, \tilde Q_t, \tilde{\grave{Q}}_t, \tilde{\bar{\alpha}}_{0t}, \tilde{\bar{\alpha}}_t, \LL(X_t, \gamma_t))(X_t; \gamma_t)] \Bigg ] \dd t + \begin{pmatrix}Q_t \\ \grave{Q}_t \end{pmatrix} \dd B_t,\\
	\dd \begin{pmatrix} Y_t \\ \grave{Y}_t \end{pmatrix} &=  - \partial_{(x, \gamma)} H(t, X_t, \gamma_t, Y_t, \grave{Y}_t, Z_t, \grave{Z}_t, \bar \alpha_{0t}, \bar \alpha_t, \LL(X_t, \gamma_t)) \dd t + \begin{pmatrix}Z_t \\ \grave{Z}_t \end{pmatrix} \dd B_t,
\end{split}
\end{align}
with conditions
\begin{align}
\label{eqn:FBSDEMmsufficientenlargednomajorcond}
	 &\begin{pmatrix} P_T \\ \grave{P}_T \end{pmatrix} =  \partial_{\lambda} g_0(\LL( X_T, \gamma_T)) (X_T; \gamma_T),
	\quad &\begin{pmatrix} Y_T \\ \grave{Y}_T \end{pmatrix} &=  \partial_{(x, \gamma)} g(X_T, \gamma_T, \LL(X_T, \gamma_T)).
\end{align}
We recall that the probability measure $\lambda$ serves as a proxy for the joint distribution of $(X_t ; \gamma_t)$. Moreover, we will denote its first and second marginals as $\lambda_x(\cdot) := \lambda(\cdot \times \reals^{d_0})$ and $\lambda_\gamma(\cdot) := \lambda(\reals^d \times \cdot).$ Furthermore, the probability measure $\nu \in \PP_2(\reals^{(d + k) + (d + k)})$ is a proxy for the joint distribution of $(X_t; \gamma_t; P_t; \grave{P}_t)$.

We would like to emphasise that the following examples, despite seeming simple, can not be tackled with the theory developed previously in the literature. Succinctly put, this is because the minimisers of the the upcoming Hamiltonians will depend on arguments which were forbidden in the previous literature.
\subsection{Example 1}
We will start off with a simple example. The coefficients we choose are given by
\begin{align*}
	b(t, x, \gamma, \alpha_0, \alpha, \lambda) &= \alpha - \alpha_0 - \frac{1}{2} \sigma_t^2, 	&\sigma(t,x, \gamma) &= \sigma_t, \\
	f_0(t, \alpha_0, \lambda) &= \frac{1}{2} \alpha_0^2, 		&g_0(\lambda) &= - \bar \lambda_x + \bar \lambda_\gamma, \\
	f(t, x, \gamma, \alpha_0, \alpha, \lambda) &= \frac{1}{2} \alpha^2 \alpha_0^2, 	&g(x, \gamma, \lambda) &= - x.
\end{align*}
The motivation for choosing such coefficients is the following. First, the coefficients $b$ and $\sigma$ are chosen so as to model the log of a stock price as an arithmetic Brownian motion, and it is adjusted due to the taxation rate of the regulator. $f_0$ states that the regulator wishes to reduce running taxation rates. $g_0$ is chosen so that the regulator desires to maximise the overall economy, and also minimise the emissions of the representative firm. $f$ states that the representative firm wishes to minimise their running emissions level, which is weighted by the regulator's taxation rate. $g$ is chosen so that the representative firm wishes to maximise their terminal stock price.  

The reduced Hamiltonians then read as
\begin{align*}
	H_0^R(t, x, \gamma, y, \grave{y}, \alpha_0, \alpha, \lambda) &= (\alpha - \alpha_0 - \frac{1}{2} \sigma_t^2) y + \alpha \grave{y} + \frac{1}{2} \alpha_0^2,\\
	H^R(t, x, \gamma, y, \grave{y}, \alpha_0, \alpha, \lambda) &= (\alpha - \alpha_0 - \frac{1}{2} \sigma_t^2)y + \alpha \grave{y} + \frac{1}{2} \alpha^2 \alpha_0^2.
\end{align*}
By appealing to the sufficient stochastic maximum principle for the \hyperref[MmMFGstrategy]{MmMFG strategy} \Cref{thm:sufficientSMPMm}, our first task is to minimise $\alpha_0 \mapsto \Ex \left [ H^R_0(t, X_t, \gamma_t, P_t, \grave{P}_t, \alpha_0, \alpha, \LL(X_t, \gamma_t))\right ]$ and $\alpha \mapsto H^R(t, x, \gamma, y, \grave{y}, \alpha_0, \alpha, \lambda)$. This yields the minimisers: 
\begin{align*}
	\mathring \alpha_0(t, \LL(X_t, \gamma_t, P_t, \grave{P}_t)) &= \Ex[P_t], \\
	\mathring \alpha(t, x, \gamma, y, \grave{y}, \alpha_0, \lambda) &= -(y + \grave{y})/ \alpha_0^2. 
\end{align*}
The FBSDE \crefrange{eqn:FBSDEMmsufficientenlargednomajor}{eqn:FBSDEMmsufficientenlargednomajorcond} thus becomes: 
\begin{align*}
	\dd X_t &= \left ( - \frac{Y_t + \grave{Y}_t}{(\Ex[P_t])^2} - \Ex[P_t] - \frac{1}{2} \sigma_t^2 \right ) \dd t + \sigma_t \dd B_t, \\
	\dd \gamma_t &= - \frac{Y_t + \grave{Y}_t}{\Ex[P_t]} \dd t, \\
	\dd \begin{pmatrix} P_t \\ \grave{P}_t \end{pmatrix} &=\begin{pmatrix}Q_t \\ \grave{Q}_t \end{pmatrix} \dd B_t,  \\
	\dd \begin{pmatrix} Y_t \\ \grave{Y}_t \end{pmatrix} &= \begin{pmatrix}Z_t \\ \grave{Z}_t \end{pmatrix} \dd B_t,
\end{align*}
with terminal conditions:
\begin{align*}
&\begin{pmatrix} P_T \\ \grave{P}_T \end{pmatrix} =  \begin{pmatrix} -1 \\  1 \end{pmatrix}, \quad &\begin{pmatrix} Y_T \\ \grave{Y}_T \end{pmatrix} &=  \begin{pmatrix} -1 \\  0 \end{pmatrix}.
\end{align*}
 If the above FBSDE has a solution $(\hat X, \hat \gamma, \hat P, \hat{\grave{P}}, \hat Y, \hat{\grave{Y}}, \hat Q, \hat{\grave{Q}}, \hat Z, \hat{\grave{Z}})$, then the optimal strategy is given by $\hat \alpha_{0t} = \Ex[\hat P_t]$ and $\hat \alpha_t = -(\hat Y_t + \hat{\grave{Y}}_t)/ (\hat \alpha_{0t})^2$. It is plain to see that $\hat P_t = -1, \hat{\grave{P}}_t = 1, \hat Y_t = -1, \hat{\grave{Y}}_t = 0$. Therefore, the optimal strategy is $\hat \alpha_{0t} = -1$ and $\hat \alpha_t = 1$.

\subsection{Example 2}
One reason for the simplistic solution of the previous example stems from the fact that the major player's running cost $f_0$ is essentially independent of the representative minor player. Therefore, to obtain a framework which is more interesting, an option is to adjust the running cost of the major player $f_0$. The coefficients we now choose are given by
\begin{align*}
	b(t, x, \gamma, \alpha_0, \alpha, \lambda) &= \alpha - \alpha_0 - \frac{1}{2} \sigma_t^2, 	&\sigma(t,x, \gamma) &= \sigma_t, \\
	f_0(t, \alpha_0, \lambda) &= \frac{1}{2} \alpha_0^2/ \bar\lambda_\gamma, 		&g_0(\lambda) &= - \bar \lambda_x + \bar \lambda_\gamma, \\
	f(t, x, \gamma, \alpha_0, \alpha, \lambda) &= \frac{1}{2} \alpha^2 \alpha_0^2, 	&g(x, \gamma, \lambda) &= - x.
\end{align*}
The interpretation in this modification of $f_0$ is that the regulator punishes firms with low emissions less than firms with high emissions. The reduced Hamiltonians then read as
\begin{align*}
	H_0^R(t, x, \gamma, y, \grave{y}, \alpha_0, \alpha, \lambda) &= (\alpha - \alpha_0 - \frac{1}{2} \sigma_t^2) y + \alpha \grave{y} + \frac{1}{2} \alpha_0^2/\bar \lambda_\gamma,\\
	H^R(t, x, \gamma, y, \grave{y}, \alpha_0, \alpha, \lambda) &= (\alpha - \alpha_0 - \frac{1}{2} \sigma_t^2)y + \alpha \grave{y} + \frac{1}{2} \alpha^2 \alpha_0^2.
\end{align*}
Minimising $\alpha_0 \mapsto \Ex \left [ H^R_0(t, X_t, \gamma_t, P_t, \grave{P}_t, \alpha_0, \alpha, \LL(X_t, \gamma_t))\right ]$ and $\alpha \mapsto H^R(t, x, \gamma, y, \grave{y}, \alpha_0, \alpha, \lambda)$ yields the minimisers: 
\begin{align*}
	\mathring \alpha_0(t, \LL(X_t, \gamma_t, P_t, \grave{P}_t)) &= \Ex[P_t]\Ex[\gamma_t], \\
	\mathring \alpha(t, x, \gamma, y, \grave{y}, \alpha_0, \lambda) &= -(y + \grave{y})/\alpha_0^2. 
\end{align*}
The FBSDE \crefrange{eqn:FBSDEMmsufficientenlargednomajor}{eqn:FBSDEMmsufficientenlargednomajorcond} thus becomes: 
\begin{align*}
	\dd X_t &= \left ( - \frac{Y_t + \grave{Y}_t}{(\Ex[P_t])^2 (\Ex[\gamma_t])^2} - \Ex[P_t]\Ex[\gamma_t] - \frac{1}{2} \sigma_t^2 \right ) \dd t + \sigma_t \dd B_t, \\
	\dd \gamma_t &= - \frac{Y_t + \grave{Y}_t}{(\Ex[P_t])^2 (\Ex[\gamma_t])^2} \dd t, \\
	\dd \begin{pmatrix} P_t \\ \grave{P}_t \end{pmatrix} &= \left (0; \frac{1}{2}(\Ex[P_t])^2(\Ex[\gamma_t])^2 \Ex[\partial_{\lambda_\gamma} (1/ \Ex[\gamma_t])(\gamma_t)]\right) \dd t +  \begin{pmatrix}Q_t \\ \grave{Q}_t \end{pmatrix} \dd B_t,  \\
	\dd \begin{pmatrix} Y_t \\ \grave{Y}_t \end{pmatrix} &= \begin{pmatrix}Z_t \\ \grave{Z}_t \end{pmatrix} \dd B_t,
\end{align*}
with terminal conditions:
\begin{align*}
&\begin{pmatrix} P_T \\ \grave{P}_T \end{pmatrix} =  \begin{pmatrix} -1 \\  1 \end{pmatrix}, \quad &\begin{pmatrix} Y_T \\ \grave{Y}_T \end{pmatrix} &=  \begin{pmatrix} -1 \\  0 \end{pmatrix}.
\end{align*}
 If the above FBSDE has a solution $(\hat X, \hat \gamma, \hat P, \hat{\grave{P}}, \hat Y, \hat{\grave{Y}}, \hat Q, \hat{\grave{Q}}, \hat Z, \hat{\grave{Z}})$, then the optimal strategy is given by $\hat \alpha_{0t} = \Ex[\hat P_t]\Ex[\hat \gamma_t]$ and $\hat \alpha_t = -(\hat Y_t + \hat{\grave{Y}}_t)/ (\hat \alpha_{0t})^2$.

 First, we immediately have that $\hat P_t = -1, \hat Y_t = -1, \hat{\grave{Y}}_t = 0$. This suggests that we study the ODE 
 \begin{align*}
    \dd (\Ex[\gamma_t]) = 1/(\Ex[\gamma_t])^2 \dd t, \quad \Ex[\gamma_0] = 0.
 \end{align*}
 This yields the real solution of $\Ex[\hat \gamma_t] = 3^{1/3} t^{1/3}$. Therefore the optimal strategy is $\hat \alpha_{0t} = -3^{1/3} t^{1/3}$ and $\hat \alpha_t = 1/(3^{2/3} t^{2/3})$.

\subsection{Example 3}

Although Example 2 is more interesting than Example 1, we still have not been able to achieve substantial interaction between the major and minor player. Again our strategy is to modify the coefficient $f_0$, so that the coefficients we choose are given by:
\begin{align*}
	b(t, x, \gamma, \alpha_0, \alpha, \lambda) &= \alpha - \alpha_0 - \frac{1}{2} \sigma_t^2, 	&\sigma(t,x, \gamma) &= \sigma_t, \\
	f_0(t, \alpha_0, \lambda) &= \frac{1}{2} \alpha_0^2/ \bar\lambda_\gamma + \alpha_0 \bar \lambda_x, 		&g_0(\lambda) &= - \bar \lambda_x + \bar \lambda_\gamma, \\
	f(t, x, \gamma, \alpha_0, \alpha, \lambda) &= \frac{1}{2} \alpha^2 \alpha_0^2, 	&g(x, \gamma, \lambda) &= - x.
\end{align*}
The interpretation in this modification of $f_0$ is still that the regulator punishes firms with low emissions less than firms with high emissions, but in addition, the regulator's taxation rate is affected by the overall performance of the economy. 

The reduced Hamiltonians then read as
\begin{align*}
	H_0^R(t, x, \gamma, y, \grave{y}, \alpha_0, \alpha, \lambda) &= (\alpha - \alpha_0 - \frac{1}{2} \sigma_t^2) y + \alpha \grave{y} + \frac{1}{2} \alpha_0^2/\bar \lambda_\gamma + \alpha_0 \bar \lambda_x,\\
	H^R(t, x, \gamma, y, \grave{y}, \alpha_0, \alpha, \lambda) &= (\alpha - \alpha_0 - \frac{1}{2} \sigma_t^2)y + \alpha \grave{y} + \frac{1}{2} \alpha^2 \alpha_0^2.
\end{align*}
Minimising $\alpha_0 \mapsto \Ex \left [ H^R_0(t, X_t, \gamma_t, P_t, \grave{P}_t, \alpha_0, \alpha, \LL(X_t, \gamma_t))\right ]$ and $\alpha \mapsto H^R(t, x, \gamma, y, \grave{y}, \alpha_0, \alpha, \lambda)$ yields the minimisers: 
\begin{align*}
	\mathring \alpha_0(t, \LL(X_t, \gamma_t, P_t, \grave{P}_t)) &= (\Ex[P_t] - \Ex[X_t])\Ex[\gamma_t], \\
	\mathring \alpha(t, x, \gamma, y, \grave{y}, \alpha_0, \lambda) &= -(y + \grave{y})/\alpha_0^2. 
\end{align*}
The FBSDE \crefrange{eqn:FBSDEMmsufficientenlargednomajor}{eqn:FBSDEMmsufficientenlargednomajorcond} thus becomes: 
\begin{align*}
	\dd X_t &= \left ( - \frac{Y_t + \grave{Y}_t}{(\Ex[P_t] - \Ex[X_t])^2 (\Ex[\gamma_t])^2} - (\Ex[P_t] - \Ex[X_t])\Ex[\gamma_t] - \frac{1}{2} \sigma_t^2 \right ) \dd t + \sigma_t \dd B_t, \\
	\dd \gamma_t &= - \frac{Y_t + \grave{Y}_t}{(\Ex[P_t] - \Ex[X_t])^2 (\Ex[\gamma_t])^2} \dd t, \\
	\dd \begin{pmatrix} P_t \\ \grave{P}_t \end{pmatrix} &= \left ((\Ex[P_t] - \Ex[X_t])\Ex[\gamma_t]; \frac{1}{2}(\Ex[P_t] - \Ex[X_t])^2(\Ex[\gamma_t])^2 \Ex[\partial_{\lambda_\gamma} (1/ \Ex[\gamma_t])(\gamma_t)]\right) \dd t +  \begin{pmatrix}Q_t \\ \grave{Q}_t \end{pmatrix} \dd B_t,  \\
	\dd \begin{pmatrix} Y_t \\ \grave{Y}_t \end{pmatrix} &= \begin{pmatrix}Z_t \\ \grave{Z}_t \end{pmatrix} \dd B_t,
\end{align*}
with terminal conditions:
\begin{align*}
&\begin{pmatrix} P_T \\ \grave{P}_T \end{pmatrix} =  \begin{pmatrix} -1 \\  1 \end{pmatrix}, \quad &\begin{pmatrix} Y_T \\ \grave{Y}_T \end{pmatrix} &=  \begin{pmatrix} -1 \\  0 \end{pmatrix}.
\end{align*}
 If the above FBSDE has a solution $(\hat X, \hat \gamma, \hat P, \hat{\grave{P}}, \hat Y, \hat{\grave{Y}}, \hat Q, \hat{\grave{Q}}, \hat Z, \hat{\grave{Z}})$, then the optimal strategy is given by $\hat \alpha_{0t} = (\Ex[\hat P_t] + \Ex[\hat X_t])\Ex[\hat \gamma_t]$ and $\hat \alpha_t = -(\hat Y_t + \hat{\grave{Y}}_t)/ (\hat \alpha_{0t})^2$. 
 
 Unfortunately we were unable to solve the above system explicitly. In fact more generally, it seems probable that the resulting FBSDE from pushing the complexity of the system further than this will not be explicitly solvable. However, we should highlight that the formulation of the preceding FBSDE was only possible due the sufficient stochastic maximum principle developed in this article. In future work, we would like to study such systems using numerical methods.


\section{Conclusion}
\noindent
In this article, we have considered a mean field model for a game with a continuum of carbon-emitting firms, who wish to maximise their stock performance, and a regulator who wishes for the economy to flourish whilst simultaneously punishing firms who behave non-green. Our desire was to model this through the theory of mean field games with major and minor players, however we found that with the existing results in the literature, non-trivial models were not feasible. By considering the scenario where the minimisers of the major and representative minor players' Hamiltonians depend on certain additional arguments, we extended the necessary and sufficient stochastic maximum principle developed in \citep{carmona2016probabilistic}. Using this theorem, we were able to solve some examples explicitly pertaining to problems in Green and sustainable finance. Future work would be to consider more complex models which fit within our framework and where explicit solutions may not exist, therefore requiring the use of numerical methods. We would also like to formulate our problem through a mean field Stackelberg game, and compare the results to the ones achieved through our mean field game with major and minor players.

\section*{Acknowledgements}
\noindent
Anna Aksamit, Kaustav Das, Ivan Guo, Kihun Nam, and Zhou Zhou have been supported by the Australian Research Council (Grant DP220103106).


\renewcommand{\bibname}{References}
\bibliographystyle{plainnat}
\bibliography{refsdp}

\appendix

\section{L differentiation}
\label{appen:Lderivative}
\noindent
In this appendix, we will provide a brief description of the differential calculus on the space of probability measures $\PP_2(\reals^q)$ that we extensively utilise in this article. There are a few notions for this, but the one that is appropriate for our needs is called L differentiability, introduced by Lions in his lectures at the Coll\`ege de France, see \citep{cardaliaguet2010notes} for a comprehensive review of these lectures. Most of this appendix is an informal summary based off \citep[][\S 5.2]{carmona2018mfgI} as well as \citep[][\S 3.1]{carmona2015forward}, and we refer the reader here for more in depth discussions details, especially regarding technical results. Moreover, we will extensively be utilising the matrix notation from \Cref{remark:matrixnotation}.
\subsection{Definitions}
\label{appen:Lderivativedefns}
To our knowledge, the definition of L differentiability in the current literature is given for functions which map $\PP_2(\reals^q)$ to $\reals$. However, L differentiation is often utilised in the context where the codomain is $\reals^p$ for some $p \geq 1$, despite the definition not being stated explicitly in the literature. Therefore, our treatment of L differentiation will cover this more general case. This is quite straightforward and just requires one to consider the function outputs component wise, and then apply the rules of L differentiation to the individual components. In fact, this is probably why this more general setting has not been explicitly treated in the literature. Despite this, we feel for completeness, clarity, and convenience to the reader, that this general setting should be addressed explicitly. One can also derive similar definitions for functions which map $\PP_2(\reals^q)$ to $\reals^{p_1 \times p_2}$ (which would be useful for example, when the diffusion coefficient $\sigma$ in \cref{eqn:stateprocessminor} depends on $\mu$); we leave this adaptation to the reader.

The definition of L derivative requires the notion of lifting functions of probability measures to spaces of random variables. In order for lifting to be well-defined, we require the following surjection type result: For every $\mu \in \PP_2(\reals^q)$, there exists a probability space $(\tilde \Omega, \tilde \FF, \tilde \Pro)$ and square integrable  $\reals^q$ valued random variable $X$ on it such that $\tilde \Pro \circ X^{-1} = \mu$. 
\begin{definition}[Lifted function]
Let $ \PP_2(\reals^q) \ni \mu \mapsto \phi(\mu) \in \reals^p$. The lifting of $\phi$ at $\mu \in \PP_2(\reals^q)$ is denoted by $\tilde \phi$ and defined as $L^2(\tilde \Omega, \tilde \FF, \tilde \Pro ; \reals^q) \ni X \mapsto \tilde \phi(X):= \phi(\LL(X)) \in \reals^p$, where $(\tilde \Omega, \tilde \FF, \tilde \Pro)$ is a probability space and $X$ is a square integrable random variable on it such that $\tilde \Pro \circ X^{-1} = \mu$.
\end{definition}

\begin{definition}[L derivative]
Let $ \PP_2(\reals^q) \ni \mu \mapsto \phi(\mu) \in \reals^p$ and denote by $\tilde \phi$ its lifting. Fix $\mu \in  \PP_2(\reals^q)$ and let $X, H \in L^2(\tilde \Omega, \tilde \FF, \tilde \Pro; \reals^q)$ such that $\tilde \Pro \circ X^{-1} = \mu$. Then $\phi$ is $L$ differentiable at $\mu$ if and only if the Fr\'echet derivatives of the components of $\tilde \phi \equiv (\tilde \phi_1, \dots, \tilde \phi_p)$ at $X$, denoted by $D \tilde \phi_i(X)$ and identified with random variables in $L^2(\tilde \Omega, \tilde \FF, \tilde \Pro ; \reals^q)$ through Riesz representation, exist. The $L$ derivative of $\phi$ at $\mu$ is defined as $D \tilde \phi(X) := (D \tilde \phi_1(X), \dots, D \tilde \phi_p(X))^\top \in L^2(\tilde \Omega, \tilde \FF, \tilde \Pro; \reals^{p \times q})$, which is a matrix valued random variable.
\end{definition}
To further elaborate on the above definitions, first note that the Fr\'echet derivatives of the components $D \tilde \phi_i(X)$ satisfy
\begin{align*}
	\tilde \phi_i(X + H) 
	&= \tilde \phi_i(X) + \Ex[D \tilde \phi_i(X) \cdot H] + \littleo{H}, \quad 1 \leq i \leq p.
\end{align*}
Therefore, 
\begin{align*}
	\tilde \phi(X + H) = \tilde \phi(X) + \Ex[D \tilde \phi (X) H] + \littleo{H}.
\end{align*} 
\begin{remark}
The distribution of the L derivative does not depend on the random variable used to represent it. That is, if $X, \tilde X$ are two $\reals^q$ valued random variables possibly defined on different probability spaces $(\Omega, \FF, \Pro), (\tilde \Omega, \tilde \FF, \tilde \Pro)$ such that $\Pro \circ X^{-1} = \tilde \Pro \circ \tilde X^{-1} = \mu$, then $D \tilde \phi(X) \stackrel{d}{=} D \tilde \phi(\tilde X)$. Therefore, one can identify the L derivative with a Borel measurable function which we denote by $\partial_\mu \phi(\mu)(\cdot): \reals^q \to \reals^{p \times q}$, so that $D \tilde \phi (X) = \partial_\mu \phi(\mu)(X)$ whenever $X$ is a $\reals^q$ valued square integrable random variable supported on some probability space and such that $\LL(X) = \mu$. An illustrative example is the following. Consider the function 
\begin{align*}
	\phi(\mu) = \int_{\reals^q} h(x) \mu( \dd x)
\end{align*}
where $h: \reals^q \to \reals^p$, and the preceding integral being a Bochner integral. Therefore, $\phi: \PP_2(\reals^q) \to \reals^p$. Hence, $\partial_\mu \phi(\mu)(\cdot): \reals^q \to \reals^{p + q}$, and it can be shown that $\partial_\mu \phi(\mu)(\cdot) = \partial_x h(\cdot)$ so that the L derivative is identified with the Jacobian of $h$. Additionally, when $h(x) = x$, then $\partial_\mu \phi(\mu)(\cdot) = I_{p \times q}$.
\end{remark}

\begin{definition}[L convexity]
Let $\PP_2(\reals^q) \ni \mu \mapsto \phi(\mu) \in \reals$. Then $\phi$ is convex if for each $\mu, \mu' \in \PP_2(\reals^q)$, we have 
\begin{align*}
	\phi(\mu') - \phi(\mu) \geq \tilde \Ex [ \partial_\mu \phi(\mu)(\tilde X) \cdot (\tilde X' - \tilde X)]
\end{align*}
whenever $\tilde X, \tilde X' \in L^2(\tilde \Omega, \tilde \FF, \tilde \Pro)$ such that $\tilde X \sim \mu$ and $\tilde X' \sim \mu'$.
\end{definition}

\subsection{Calculating L derivatives in some particularly useful settings}
In even the slightest non-trivial settings, calculating L derivatives can be nonintuitive. In this section we will consider a few examples which should help clarify how some calculations in this article are performed.

In many instances of L differentiation, the fact that the distribution of the L derivative is independent of the random variable and probability space used to represent it can be exploited in a convenient way. For example, consider the case where we have a function $\reals^d \times \PP_2(\reals^d) \ni (x, \mu) \mapsto \psi(x, \mu) \in \reals^d$. By exploiting the fundamental theorem of Fr\'echet differentiation, and utilising so-called partial L differentiation, we have the following representations
\begin{align*}
	\psi(X + Y, \LL(X + Y)) &= \psi(X, \LL(X)) + \int_0^1 \Big ( \partial_x \psi(X + \lambda Y, \LL(X + \lambda Y)) \cdot Y \\
	&\quad + \tilde \Ex[ \partial_\mu \psi(X + \lambda Y, \LL(X + \lambda Y))(\tilde X + \lambda \tilde Y) \cdot \tilde Y] \Big ) \dd \lambda, \\
	\Ex[ \psi(X + Y, \LL(X + Y))] &= \Ex[ \psi(X, \LL(X))] + \int_0^1 \Big ( \Ex [ \partial_x \psi(X + \lambda Y, \LL(X + \lambda Y)) \cdot Y ] \\
	&\quad + \Ex \tilde \Ex [ \partial_\mu \psi(X + \lambda Y, \LL(X + \lambda Y))(\tilde X + \lambda \tilde Y) \cdot \tilde Y] \Big ) \dd \lambda.
\end{align*}
Here random variables with tildes denote independent copies of the respective original versions defined on a copy of the respective original probability space. For example, in the above $X, \tilde X$ are defined on probability spaces $(\Omega, \FF, \Pro), (\tilde \Omega, \tilde \FF, \tilde \Pro)$ respectively such that $\Pro \circ X^{-1} = \tilde \Pro \circ \tilde X^{-1}$. This technique of copying the random variables and the underlying probability space is crucial. For example:
\begin{align*}
	\psi(X, \LL(X + Y)) - \psi(X, \LL(X)) = \int_0^1 \tilde \Ex[ \partial_\mu \psi(X, \LL(X + \lambda Y))(\tilde X + \lambda \tilde Y) \cdot \tilde Y] \Big ) \dd \lambda.
\end{align*}
The above LHS is a random variable. On the other hand, if we did not utilise the tilde notation on the RHS, then it would be non-random, which does not make sense.

Sometimes it will be necessary to take L derivative of a function in a marginal of the probability measure being mapped. To be more precise, consider the situation where we have the map $\PP_2(\reals^{q_1 + q_2}) \ni \nu \mapsto \phi(\nu) \in \reals^p$. Let $\mu_i, i = 1, 2$, denote the marginals of $\nu$ in the sense that $\mu_1(\cdot) = \nu(\cdot \times \reals^{q_2})$ and $\mu_2(\cdot) = \nu(\reals^{q_1} \times \cdot)$. Consider the setting when $\phi(\nu) = \check \phi(\mu_1)$, where $\PP_2(\reals^{q_1}) \ni \mu_1 \mapsto \check \phi(\mu_1) \in \reals^p$. Our objective is to express $\reals^{q_1 + q_2} \ni (x, y) \mapsto \partial_\nu \phi(\nu)(x, y) \in \reals^{p \times (q_1 + q_2)}$ in terms of $\reals^{q_1} \ni x \mapsto \partial_{\mu_1} \check \phi(\mu_1)(x) \in \reals^{p \times q_1}$. It not too difficult to show that 
\begin{align*}
	\partial_\nu \phi(\nu)(x, y) = (\partial_{\mu_1} \check  \phi (\mu_1)(x), \bs{0}_{p \times q_2}).
\end{align*}
For illustration, consider the example where
\begin{align*}
	\phi(\nu) = \check  \phi(\mu_1) = \int_{\reals^{d_1}} h(x) \mu_1(\dd x)
\end{align*}
with $h : \reals^{q_1} \to \reals^{p}$. Then we have 
\begin{align*}
	\partial_\nu \phi(\nu)(x, y) =
\begin{pmatrix}
	\partial_{x_1} h_1(x)& \cdots& \partial_{x_{q_1}} h_1(x)& \bs{0}_{1 \times q_2} \\
	\vdots 				& \ddots & \vdots& \vdots \\
	\partial_{x_1} h_{p}(x)& \cdots& \partial_{x_{q_1}} h_{p}(x)& \bs{0}_{1 \times q_2} \\
\end{pmatrix}
\end{align*}
In other words, $\partial_\nu \phi(\nu)(x, y) = ((\partial_x h_1(x), \bs{0}_{1 \times q_1}); \dots; (\partial_x h_{p}(x), \bs{0}_{1 \times q_2}))$.

\section{Proof of \Cref{thm:necessarySMPMm}}
\label{appen:necessarySMPproof}
\noindent
The proof of the necessary stochastic maximum principle will be undertaken over the next few subsections. We will isolate the cases for the major player and representative minor player. Our proof is inspired by arguments made in \citep[][\S 4.1]{carmona2015forward}, and we refer the reader here for any details that may not pop out as obvious. The main purpose of including the proof is to illustrate the asymmetry that the necessary condition implies in the minimisations of the major and representative minor player's Hamiltonians. Recall that we write $\ubar{x} := (x_0, x)$, and $\circ$ in place of any arguments of functions that are redundant. In the following we will write $(\hat \alpha_0, \hat \alpha) \equiv (\hat \alpha^\mu_0, \hat \alpha^\mu)$ to denote the Nash equilibrium obtained from \textbf{Step} \Cref{MmMFG:Step2} of the \hyperref[MmMFGstrategy]{MmMFG strategy}, thereby suppressing the explicit dependence on $\mu$.


\subsection{The major player's problem}
We first address the optimisation for the major player. As this is the major player's problem, we are considering the McKean-Vlasov versions of the state processes, namely \crefrange{eqn:stateprocessmajorMKV}{eqn:stateprocessminorMKV}.

Consider a perturbation of their optimal strategy in the direction $\beta_0 \in \AAbb_0$ given by $\alpha_0^{\vep_0} := \hat \alpha_0 + \vep_0 \beta_0$, which for a sufficient small $\vep_0$ remains in $\AAbb_0$. Denote by $\hat X_0$ and $X_0^{\vep_0}$ the major player's state process \cref{eqn:stateprocessmajorMKV} along the strategies $(\hat \alpha_0, \hat \alpha)$ and $(\alpha_0^{\vep_0}, \hat \alpha)$ respectively. Similarly, denote by $\hat X$ and $\hat X^{\vep_0}$ for the minor player's state process \cref{eqn:stateprocessminorMKV} along the strategies $(\hat \alpha_0, \hat \alpha)$ and $(\alpha_0^{\vep_0}, \hat \alpha)$ respectively.

The main idea is to consider the G\^ateaux derivative of $\alpha_0 \mapsto J_0(\alpha_0, \hat \alpha)$ at $\hat \alpha_0$ in the direction $\beta_0$, given by
\begin{align}
\label{eqn:gateauxJ0i}
\begin{split}
	&\lim_{\vep_0 \downarrow 0} \vep_0^{-1} \left [J_0(\alpha_0^{\vep_0}, \hat \alpha) - J_0(\hat \alpha_0, \hat \alpha) \right ] \\
	&= \lim_{\vep_0 \downarrow 0} \vep_0^{-1} \Ex \Bigg[ \int_0^T \left (f_0(t, X_{0t}^{\vep_0}, \alpha^{\vep_0}_{0t}, \LL(X_t^{\vep_0})) - f_0(t, \hat X_{0t}, \hat \alpha_{0t}, \LL(\hat X_t)) \right ) \dd t \\&\quad + g_0(X_{0T}^{\vep_0}, \LL(X_T^{\vep_0})) - g_0(\hat X_{0T}, \LL(\hat X_T)) \Bigg].
\end{split}
\end{align}
\begin{enumerate}[label = Step M(\roman*)]
\item
We now define processes $V_0$ and $V$ (taking values in $\reals^{d_0}$ and $\reals^d$ respectively) through the SDEs:
\begin{align*}
	\dd V_{0t} &= \Bigg (\partial_{x_0} b_0(t, X_{0t}, \alpha_{0t}, \LL(X_t)) V_{0t} + \partial_{\alpha_0} b_0(t, X_{0t}, \alpha_{0t}, \LL(X_t)) \beta_{0t} \\
	\quad &+ \tilde \Ex[\partial_{\mu} b_0(t, X_{0t}, \alpha_{0t}, \LL(X_t))(\tilde X_t) \tilde V_{t} ] \Bigg ) \dd t \\
	V_{00} &= 0,
\end{align*}
and
\begin{align*}
	\dd V_{t} &= \Bigg (\partial_{x_0} b(t, X_{0t}, X_t, \alpha_{0t}, \alpha_t, \LL(X_t)) V_{0t} + \partial_x b(t, X_{0t}, X_t, \alpha_{0t}, \alpha_{t}, \LL(X_t)) V_{t} \\ &+ \partial_{\alpha_0} b(t, X_{0t}, X_t, \alpha_{0t}, \alpha_{t}, \LL(X_t)) \beta_{0t} + \tilde \Ex[\partial_{\mu} b(t, X_{0t}, X_t, \alpha_{0t}, \alpha_{t}, \LL(X_t))(\tilde X_t) \tilde V_{t} ] \Bigg ) \dd t \\
	&\quad + (\partial_x \sigma(t, X_t)) V_{t} \dd B_t, \\
	V_{0} &= 0,
\end{align*}
respectively. Here we interpret $\partial_x \sigma(t, X_t) \in \reals^{(d \times m) \times d}$. $V_0$ and $V$ serve in a sense as derivatives of $\vep_0 \mapsto X_0^{\vep_0}$ and $\vep_0 \mapsto X^{\vep_0}$ respectively. This fact can be verified by a formal computation of ${\vep_0}^{-1} (\dd X_{0t}^{\vep_0} - \dd \hat X_{0t} )$ and $\vep_0^{-1} ( \dd X_t^{\vep_0} - \dd \hat X_t)$ and then taking $\vep_0 \to 0$. To be precise, one can show that 
\begin{align*}
	\lim_{\vep_0 \to 0} \Ex \left [ \sup_{t \in [0, T]}  |\vep_0^{-1} (X_{0t}^{\vep_0} - \hat X_{0t}) - \hat V_{0t} |^2 \right ] &= 0, \quad \lim_{\vep_0 \to 0} \Ex \left [ \sup_{t \in [0, T]}  |\vep_0^{-1} (X_t^{\vep_0} - \hat X_t) - \hat V_t|^2 \right ] = 0.
\end{align*}
\item
Going back to \cref{eqn:gateauxJ0i}, we now rewrite the differences in $f_0$ and $g_0$ using the FTOC. Then, appealing to the regularity of coefficients provided in \Cref{ass:regularity}, we take $\vep_0$ to 0 as the limit suggests and note that $V_0$ and $V$ serve as derivatives of $X_0^{\vep_0}$ and $X^{\vep_0}$ respectively. Thus we obtain the form of the G\'ateaux derivative as
\begin{align}
\label{eqn:GateauxJ0iii}
\begin{split}
	&\lim_{\vep_0 \downarrow 0} \vep_0^{-1} \left [J_0(\alpha_0^{\vep_0}, \hat \alpha) - J_0(\hat \alpha_0, \hat \alpha) \right ] \\
	&= \Ex \Bigg [ \int_0^T \bigg (\partial_{x_0} f_0(t, \hat X_{0t}, \hat \alpha_{0t}, \LL(\hat X_t)) \cdot \hat V_{0t} + \partial_{\alpha_0} f_0(t, \hat X_{0t}, \hat \alpha_{0t}, \LL(\hat X_t)) \cdot \beta_{0t} \\ 
	&\quad + \tilde \Ex \left [ \partial_{\mu} f_0(t, \hat X_{0t}, \hat \alpha_{0t}, \LL(\hat X_t))(\tilde{\hat X}_t) \cdot \tilde{\hat{V}}_{0t} \right ] \bigg ) \dd t \\
	&\quad + \partial_{x_0} g_0(\hat X_{0T}, \LL(\hat X_T)) \cdot \hat V_{0T} +  \tilde \Ex[\partial_{\mu} g_0(\hat X_{0T}, \LL(\hat X_T))(\tilde{\hat{X}}_T) \cdot \tilde{\hat{V}}_{0T}] \Bigg].
\end{split}
\end{align}
\item
Our goal now is to eliminate the processes $\hat V_0$ and $\hat V$ in the preceding G\^ateaux derivative \cref{eqn:GateauxJ0iii} by means of an adjoint equation. We denote the adjoints for the major and minor player's state processes by $P_{0t}$ and $P_t$ respectively, and they are defined via the BSDEs:
\begin{align}
\label{eqn:necessarySMPBSDEmajor}
\begin{split}
	\dd P_{0t} &= - \partial_{x_0} H_0(t, \ubar{X}_t, \ubar{P}_t,  Q_t, \alpha_{0t}, \alpha_t, \LL(X_t)) \dd t \\
                &\quad + Q_{0t} \dd B_t, \\
	P_{0T} &=  \partial_{x_0} g_0(X_{0T}, \LL( X_T)) (X_T)
\end{split}
\end{align}
and
\begin{align}
\begin{split}
	\dd P_t &= - \Big [\partial_x H_0(t, \ubar{X}_t, \circ, P_t, Q_t, \alpha_{0t}, \alpha_t, \LL(X_t)) \\
			&\quad + \tilde \Ex[ \partial_{\mu} H_0(t, \ubar{\tilde X}_t, \ubar{\tilde P}_t, \tilde Q_t, \tilde \alpha_{0t}, \tilde \alpha_{t}, \LL(X_t))(X_t)] \Big] \dd t\\
			&\quad + Q_t \dd B_t, \\
	P_{T} &= \partial_{\mu} g_0(X_{0T}, \LL( X_T)) (X_T),
\end{split}
\end{align}
respectively. After noting that 
\begin{align*}
	\Ex [\hat V_{0T} \cdot \hat P_{0T} + \hat V_T \cdot \hat P_T ] = \Ex [\partial_{x_0} g_0(\hat X_{0T}, \LL(\hat X_T)) \cdot \hat V_{0T} +  \tilde \Ex[\partial_{\mu} g_0(\hat X_{0T}, \LL(\hat X_T))(\tilde{\hat X}_T) \cdot \tilde{\hat{V}}_{0T}]]
\end{align*}
and then expanding the preceding LHS through stochastic integration parts, it is now possible to rewrite the G\^ateaux derivative as 
\begin{align}
	&\lim_{\vep_0 \downarrow 0} \vep_0^{-1} \left [J_0(\alpha_0^{\vep_0}, \hat \alpha) - J_0(\hat \alpha_0, \hat \alpha) \right ] \nonumber \\
	&= \Ex\left [ \int_0^T \partial_{\alpha_0} H_0^R(t, \ubar{\hat{X}}_t, \ubar{\hat{P}}_t, \hat \alpha_{0t}, \hat \alpha_t, \LL(\hat X_t)) \cdot \beta_{0t} \dd t \right ] \geq 0 \label{eqn:necessarySMPmajorineq}
\end{align}
the last inequality being true due to the optimality of $(\hat \alpha_0, \hat \alpha)$. Due to $\beta_0 \in \AAbb_0$ being arbitrary, we obtain 
\begin{align}
\label{eqn:necessarySMPmajorineqpw}
    \Ex\left [ \partial_{\alpha_0} H_0^R(t, \ubar{\hat{X}}_t, \ubar{\hat{P}}_t, \hat \alpha_{0t}, \hat \alpha_t, \LL(\hat X_t)) \cdot \beta_{0t} \dd t \right ] \geq 0
\end{align}
a.e. in $t$. Finally, utilising the convexity of $\alpha_0 \mapsto H_0^R(t, \ubar{x}, \ubar{y}, \alpha_0, \alpha, \mu)$ and replacing $\beta_{0t}$ with $\alpha_0 - \hat \alpha_{0t}$ proves the claim.
\end{enumerate}

\subsection{The representative minor player's problem}

We now consider optimisation for the representation minor player. As this is the representative minor player's problem, we are considering state processes with the flow of probability measures $(\mu_t)_{\tinT}$ fixed, which is \crefrange{eqn:stateprocessmajor}{eqn:stateprocessminor}.

Consider a perturbation of their optimal strategy in the direction $\beta \in \AAbb$ given by $\alpha^{\vep} := \hat \alpha + \vep \beta$, which remains in $\AAbb$ for a sufficiently small $\vep$. Denote by $\hat X^\mu$ and $X^{\mu, \vep}$ for the minor player's state process \cref{eqn:stateprocessminor} along the strategies $(\hat \alpha_0, \hat \alpha)$ and $(\hat \alpha_0, \hat \alpha^\vep)$ respectively. Unlike the major player's problem, there is no need to consider the state process of the major player \cref{eqn:stateprocessmajor} along these two different strategies, as it does not depend on the minor player's strategy. Therefore, we we will denote by $\hat X_0^{\mu}$ for the major player's state process \cref{eqn:stateprocessmajor} along the strategy $\hat \alpha_0$.

Similar to the major player's problem, our goal is to study G\^ateaux derivative of $\alpha \mapsto J^{\mu}(\hat \alpha_0, \alpha)$ at $\hat \alpha$ in the direction $\beta \in \AAbb$, given by
\begin{align}
\label{eqn:gateauxJFi}
\begin{split}
	&\lim_{\vep \downarrow 0} \vep^{-1} \left [J^{\mu}(\hat \alpha_0, \alpha^{\vep}) - J^{\mu}(\hat \alpha_0, \hat \alpha) \right ]  \\
	&= \lim_{\vep \downarrow 0} \vep^{-1} \Ex \Bigg [ \int_0^T \left (f(t, \hat X_{0t}^{\mu}, X_t^{\mu, \vep}, \hat \alpha_{0t}, \alpha^{\vep}_t, \mu_t) - f(t, \hat X_{0t}^{\mu}, \hat X_t^{\mu}, \hat \alpha_{0t}, \hat \alpha_t, \mu_t) \right ) \dd t \\
	&\quad + g(\hat X^{\mu}_{0T}, X_T^{\mu, \vep},\mu_T) - g(\hat X_{0T}^{\mu}, \hat X_T^{\mu}, \mu_T) \Bigg ]. 
\end{split}
\end{align}
\begin{enumerate}[label = Step m(\roman*)]
\item
We now define the process $V^\mu$ taking values in $\reals^d$ through the SDE:
\begin{align*}
	\dd V_t^{\mu} &= \left ( \partial_x b(t, \hat X_{0t}^{\mu}, \hat X_t^{\mu}, \hat \alpha_{0t}, \hat \alpha_t, \mu_t) V_t^{\mu} + \partial_\alpha b(t, \hat X_{0t}^{\mu}, \hat X_t^{\mu}, \hat \alpha_{0t}, \hat \alpha_t, \mu_t) \beta_t \right )  \dd t  + \partial_x \sigma(t, X_t^{\mu}) V_t^{\mu} \dd B_t, \\ 
	V_0^{\mu} &= 0. 
\end{align*}
$V^\mu$ serves in a sense as a derivative of $\vep \mapsto X^{\mu, \vep}$. There is no need to consider such a process for $X_0^{\mu}$, since perturbations in $\alpha$ do not affect it. The fact that the above is the correct SDE for $V^\mu$ can be deduced through a formal computation of $\vep^{-1} ( \dd X_t^{\mu, \vep} - \dd \hat X_t)$ and then taking $\vep \to 0$. To be more precise, one can show that 
\begin{align*}
	\lim_{\vep \to 0} \Ex [ \sup_{t \in [0, T]} |\vep^{-1} (X_t^{\mu, \vep} - \hat X^\mu_t) - \hat V_t^\mu|^2] = 0.
\end{align*}
\item
Going back to \cref{eqn:gateauxJFi}, we rewrite the differences in $f$ and $g$ using the FTOC. Then, appealing to the regularity of the coefficients guaranteed by \Cref{ass:regularity}, we take the limit as $\vep \to 0$. Noting that $V^{\mu}$ serves as a derivative of $X^{\mu, \vep}$, we obtain the form of the G\'ateaux derivative as
\begin{align*}
	&\lim_{\vep \downarrow 0} \vep^{-1} \left [J(\hat \alpha_0, \alpha^{\vep}) - J(\hat \alpha_0, \hat \alpha) \right ]
	\\ &\Ex \Bigg [ \int_0^T \left ( \partial_x f(t, \hat X_{0t}^{\mu}, \hat X_t^{\mu}, \hat \alpha_{0t}, \hat \alpha_t, \mu_t) \cdot \hat V_t^{\mu} + \partial_{\alpha} f(t, \hat X_{0t}^{\mu}, \hat X_t^{\mu}, \hat \alpha_{0t}, \hat \alpha_t, \mu_t) \cdot \beta_t \right ) \dd t  \\ 
	&\quad + \partial_x g(\hat X_{0T}^{\mu}, \hat X_T^{\mu}, \mu_T) \cdot \hat V^{\mu}_{T} \Bigg ].
\end{align*}	
\item
Our goal now is to eliminate the variable $\hat V^\mu$ in the preceding G\^ateaux derivative by means of an adjoint equation. We define the adjoint process $(Y^{\mu}_{t}, Z^{\mu}_t)$ via the BSDE:
\begin{align*}
	\dd Y^{\mu}_t &= - \partial_x H(t, X^{\mu}_{0t}, X^{\mu}_t,Y_{t}^{\mu}, Z_{t}^{\mu},  \alpha_{0t}, \alpha_t, \mu_t) \dd t\\
			&\quad + Z^{\mu}_t \dd B_t, \\
		Y^{\mu}_T &=  \partial_x g(X_{0T}^{\mu}, X^{\mu}_T, \mu_T).
\end{align*}
After noting that 
\begin{align*}
	\Ex [\hat V_T^\mu \cdot Y^\mu_T ] = \Ex [ \partial_x g(\hat X_{0T}^{\mu}, \hat X_T^{\mu}, \mu_T) \cdot \hat V^{\mu}_{T} ]
\end{align*}
and then expanding the preceding LHS through stochastic integration parts, it is now possible to rewrite the G\^ateaux derivative as 
\begin{align}
	&\lim_{\vep \downarrow 0} \vep^{-1} \left [J^{\mu}(\hat \alpha_0, \alpha^{\vep}) - J^{\mu}(\hat \alpha_0, \hat \alpha) \right ] \nonumber \\
	&= \Ex \left [\int_0^T \partial_{\alpha} H^R(t, \hat{\ubar{X}}_t^{\mu}, \hat Y^\mu_t, \hat Z^\mu_t, \hat \alpha_{0t}, \hat \alpha_t, \mu_t) \cdot \beta_t \dd t \right ] \geq 0 \label{eqn:necessarySMPminorineq}
\end{align}	
the last inequality being true due to the optimality of $(\hat \alpha_0, \hat \alpha)$. Since $\beta \in \AAbb$ is arbitrary, we obtain 
\begin{align}
\label{eqn:necessarySMPminorineqpw}
	\partial_\alpha H^R(t, \hat X_{0t}^\mu, \hat{\ubar{X}}_t^\mu, \hat Y^\mu_t, \hat Z^\mu_t, \hat \alpha_{0t}, \hat \alpha_t, \mu_t) 
\end{align}
$\dd t \times \dd \Pro$ a.e. Lastly, utilising the convexity of $\alpha \mapsto H(t, \ubar{x}, y, \alpha_0, \alpha, \mu)$ and replacing $\beta$ with $\alpha - \hat \alpha_t$ proves the claim.
\end{enumerate}

\begin{remark}
A key difference in the major player's problem versus the minor player's problem is that we can only remove the integral w.r.t. $\dd t$, and cannot remove the expectation, in the inequality \cref{eqn:necessarySMPmajorineq} for the major player. For the minor player, we can remove both the integral w.r.t. $\dd t$ and expectation in \cref{eqn:necessarySMPminorineq}. As a consequence, the inequality \cref{eqn:necessarySMPmajorineqpw} holds $\dd t$ a.e., whereas the inequality \cref{eqn:necessarySMPminorineqpw} holds $\dd t \times \dd \Pro$ a.e. The reason for this is due to the fact that elements of $\AAbb_0$ are deterministic processes, whereas elements of $\AAbb$ are processes progressively measurable w.r.t. $(\FF_t)_{\tinT}$. This observation is crucial for the statements of \Cref{ass:minimisersofhamiltonians} and \Cref{thm:sufficientSMPMm}.
\end{remark}

\end{document}